\numberwithin{equation}{subsection}%
\def\proof{\par\medskip\noindent {\bf Proof: }}
\def\qed{\hfill $\Box$ \medskip \par}
\def\Box{\framebox[10pt]{\rule{0pt}{3pt}}}
   \newtheorem{theorem}{Theorem}[section]
   \newtheorem{theorem}{Theorem} 
\newcommand{\R}{\mathbb{R}}
\newtheorem{remark}[theorem]{Remark}
\newtheorem{proposition}[theorem]{Proposition}
\newtheorem{lemma}[theorem]{Lemma}
\newtheorem{definition}[theorem]{Definition}
\newtheorem{corollary}[theorem]{Corollary}
\title{ The cube axiom and resolutions in homotopy theory}
\author{ Manfred Stelzer}
\date{\today}
\begin{document}
%\bibliographystyle{amsplain}
%\special{papersize=21cm,29.7cm}

\maketitle

\begin{abstract}
We show that  a version of the cube axiom holds in cosimplicial unstable coalgebras and cosimplicial spaces equipped with a resolution model structure. As an application, classical theorems in unstable homotopy theory are extended to this context.
\end{abstract}

\section{Introduction}
 The cube axiom holds in a model category  $\mathcal{M}$ if in every commutative cube  the top square is a homotopy pushout if the bottom square is a homotopy pushout and the vertical faces are homotopy pullbacks.   It was proved by Mather that  the cube axiom is valid in topological spaces with the Str\o m structure \cite{M}. In many other model categories it is known to  hold.
 Examples include every stable model category, every $\infty$-topos and the model category of  
 motivic spaces \cite[Proposition 3.15]{Ho}.

 In the model categorical axiomatic  approach to Lusternik-Schnirelmann category, due to Doerane, a variant of the cube axiom is to be assumed \cite{Do},\cite{Do2}. If  it is satisfied many of the classical topological results on Lusternik-Schnirelmann category extend. \\
To know that the cube axiom holds brings some more benefits.
There is a good theory of fiberwise localization \cite{C.S}.
% In this paper Chataur and Scherer prove that in model categories of differential graded algebras over a cofibrant operad the cube axiom holds true.
 Moreover, in a recent paper  of Devalpurka and Haine it is shown that a form of the James splitting and a weak form of the Hilton-Milnor theorem hold under some   assumptions in an $\infty$-category \cite{D.H}. The main assumption  made  is the cube axiom.\\ 

 In this paper, we investigate in which form the cube axiom holds in 
two resolution model categories of cosimplicial objects. The first is the one of cosimplicial spaces $c\mathcal{S}^{\mathcal{G}}$ and the second the one of cosimplicial unstable coalgebras $c\mathcal{CA}^{\mathcal{E}}$   for a prime field $\mathbb{F}$.   The classes of group objects $\mathcal{G}$ and $\mathcal{E}$ which define the resolutions are the Eilenberg-MacLane spaces of type  $\mathbb{F}$ and their homology  respectively \cite{Bou}. These model categories give home to the constructions  of unstable Adams spectral sequences of Bousfield and Kan \cite{Bou2},\cite{BK}. For this reason, they also  play a central role in 
the obstruction theory for realizations of unstable coalgebras  and the construction of moduli spaces for such realizations  \cite{Bl}, \cite{BRS}.
 Since  $c\mathcal{CA}^{\mathcal{E}}$ and $c\mathcal{S}^{\mathcal{G}}$  are not known  to be cofibrantly generated, the general results of %$\infty$-topoi
 \cite{Re} and  \cite{R.S.} do not apply.
     In fact, it is the class of fibrations which is defined by cocell attachments.\\
       As an application we see that versions of  classical theorems of James, Hilton-Milnor and Ganea hold in   $c\mathcal{CA}^{\mathcal{E}}$ and  $c\mathcal{S}^{\mathcal{G}}$.\\
   For the rest of this paper $\mathcal{M}$ stands for 
 $\mathcal{CA}$ or $\mathcal{S}$  and $\mathcal{F}$ for $\mathcal{E}$ or $\mathcal{G}$.\\
We show the following:

 \begin{theorem}\label{cube}
  The cube axiom holds in  $c\mathcal{M}^{\mathcal{F}}$
  % and  $c\mathcal{S}^{\mathcal{G}}$
 for cubes  
  \[
\xymatrix@!0{
& E_A \ar[dl]\ar'[d][dd]
& &  E_B \ar[dd]\ar[dl]\ar[ll]
\\
E_D \ar[dd]
& & E_C \ar[ll]\ar[dd]
\\
& A \ar[dl]
& & B \ar[dl]\ar'[l][ll]
\\
D 
& & C\ar[ll] 
}
\]

   in which
  the map 
  $B\to A$ is $0$-connected. 

 \end{theorem}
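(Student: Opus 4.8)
The plan is to reduce the cube axiom to the \emph{universality} of homotopy pushouts (their stability under base change) and then to establish that universality in the required range by a connectivity argument. First I would read off the hypotheses of the cube. Since the four vertical faces are homotopy pullbacks, the objects $E_A,E_B,E_C$ are the homotopy pullbacks of the single map $E_D\to D$ along $A\to D$, $B\to D$ and $C\to D$; for instance $E_B\simeq E_C\times^h_C B\simeq (E_D\times^h_D C)\times^h_C B\simeq E_D\times^h_D B$. Thus the whole top square is obtained by pulling the bottom homotopy pushout square back along $E_D\to D$, and the assertion that the top square is a homotopy pushout is exactly the statement that the homotopy pushout $D\simeq A\cup^h_B C$ stays a homotopy pushout after base change along $E_D\to D$. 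This van Kampen / descent reformulation is what I would prove.

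Next I would exploit that $c\mathcal{M}^{\mathcal{F}}$ is a left Bousfield localization of the Reedy (levelwise) model structure on $c\mathcal{M}$: the two structures share the same cofibrations, so the identity is a left Quillen functor to the resolution structure and therefore preserves homotopy pushouts. Hence a Reedy homotopy pushout is a fortiori an $\mathcal{F}$-homotopy pushout, and after replacing the terminal corner $D$ by the genuine homotopy pushout $P:=A\cup^h_B C$ (the comparison $P\to D$ being an $\mathcal{F}$-equivalence, along which $\mathcal{F}$-homotopy pullbacks in the base are invariant) I may assume the bottom square is a Reedy homotopy pushout. In the Reedy structure the cube axiom is available: for $\mathcal{S}$ because $c\mathcal{S}$ with its levelwise equivalences presents the $\infty$-topos of presheaves on $\Delta^{\mathrm{op}}$, where descent holds, and for $\mathcal{CA}$ by a direct argument on the underlying graded objects. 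So if the vertical faces were \emph{Reedy} homotopy pullbacks the top square would be a Reedy, hence $\mathcal{F}$-, homotopy pushout.

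The remaining and central difficulty is that the faces are only $\mathcal{F}$-homotopy pullbacks, and a left Bousfield localization need not preserve homotopy pullbacks since $L_{\mathcal{F}}$ is not left exact. The heart of the proof is therefore to show that, under the hypothesis that $B\to A$ is $0$-connected, the natural comparison from the Reedy homotopy pullback to the $\mathcal{F}$-homotopy pullback of $E_D\to D\leftarrow A$ (and likewise over $B$ and $C$) is an $\mathcal{F}$-equivalence; equivalently, that $L_{\mathcal{F}}$ is left exact enough on this diagram. I would prove this using the natural homotopy groups $\pi^{\natural}$ of the resolution model structure and the spiral exact sequence relating them to the genuine homotopy groups, together with a Blakers--Massey type estimate: the $0$-connectivity of $B\to A$, which passes to its base change $E_B\to E_A$, bounds the obstruction measuring the failure of the localization to commute with these pullbacks, forcing the comparison maps to be $\mathcal{F}$-equivalences. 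Granting this, the faces may be replaced up to $\mathcal{F}$-equivalence by Reedy homotopy pullbacks, and the Reedy cube axiom concludes the argument.

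I expect this connectivity estimate to be the principal obstacle. The resolution model categories are not cofibrantly generated and their fibrations are defined by cocell attachments, so the usual localization and left-exactness machinery is unavailable and homotopy pullbacks must be controlled by hand through $\pi^{\natural}$ and the spiral sequence. Getting the connectivity bookkeeping right — in particular that $0$-connectivity is precisely the threshold making the Blakers--Massey obstruction vanish in every relevant degree, and that $0$-connectivity is preserved under base change along $E_D\to D$ — is where the real work lies.
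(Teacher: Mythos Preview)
Your reformulation of the cube axiom as stability of homotopy pushouts under base change is correct, but the strategy built on it rests on a false premise. The resolution model structure $c\mathcal{M}^{\mathcal{F}}$ is \emph{not} a left Bousfield localization of the Reedy structure: an $\mathcal{F}$-cofibration is by definition a Reedy cofibration \emph{together with} the requirement that $[Y,F]\to[X,F]$ be a Kan fibration of simplicial groups for every $F\in\mathcal{F}$, so the $\mathcal{F}$-cofibrations form a proper subclass of the Reedy cofibrations. The identity is therefore not left Quillen from Reedy to $\mathcal{F}$, and you cannot transport Reedy homotopy pushouts to $\mathcal{F}$-homotopy pushouts in the way you propose. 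Moreover, for $\mathcal{M}=\mathcal{CA}$ the underlying model structure is discrete, so Reedy equivalences are isomorphisms and the ``Reedy cube axiom'' is simply the assertion that pullbacks (cotensor products) distribute over pushouts in $c\mathcal{CA}$; this is a genuine algebraic fact, not an $\infty$-topos formality, and it is exactly what has to be checked. Finally, your acknowledged ``central difficulty'' --- comparing Reedy and $\mathcal{F}$ homotopy pullbacks via $\pi^{\natural}$ and the spiral sequence --- is left as a sketch, and I do not see how a Blakers--Massey estimate alone would force those comparison maps to be $\mathcal{F}$-equivalences.

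The paper's argument is entirely different and bypasses all localization considerations. It uses directly the structure theorem for $\mathcal{F}$-fibrations: every fibration is a retract of a quasi-$\mathcal{F}$-cofree map, which is a tower (indexed by the coskeletal filtration) of quasi-cocell attachments, i.e.\ Reedy homotopy pullbacks of $(cF)^{\Delta^s}\to(cF)^{\partial\Delta^s}$. For a \emph{single} cocell attachment $p:E_D\to D$ one has, in every cosimplicial degree $n$, an isomorphism $E_D^n\cong D^n\times\Omega^s(cF)^n$. In $\mathcal{CA}$ the product is the tensor product and colimits are computed in underlying graded vector spaces, so tensoring with $\Omega^s(cF)^n$ distributes over the pushout $D^n\cong A^n\oplus_{B^n}C^n$; this yields $E_D^n\cong E_A^n\oplus_{E_B^n}E_C^n$ on the nose. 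The $0$-connectedness of $B\to A$ enters only to verify that $E_B\to E_A$ is again a $0$-connected $\mathcal{E}$-cofibration (in each degree it is the injective map $B^n\otimes\Omega^s(F)^n\to A^n\otimes\Omega^s(F)^n$), so that this strict pushout is an $\mathcal{E}$-homotopy pushout. The case $c\mathcal{S}^{\mathcal{G}}$ is then reduced to $c\mathcal{CA}^{\mathcal{E}}$ by applying $H_*$, which sends homotopy pullbacks to homotopy pullbacks and, by homotopy excision, homotopy pushouts along $0$-connected maps to homotopy pushouts. Induction up the coskeletal tower and closure under retracts finish the general case.
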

 
\begin{remark} That one has to make assumptions on the maps in the cube   is not uncommon. Several examples of categories in which such a restriction is necessary for the cube axiom or its dual to hold can be found in \cite[Appendix]{Do}. A particular important example is the  category of commutative differential graded algebras over the rationals. This serves  as one model for rational homotopy theory.
\end{remark}
 
   Let $Z $ be a fibrant and cofibrant object in $c\mathcal{CA}^{\mathcal{E}}$ or $ c\mathcal{S}^{\mathcal{G}}$ and write $c\mathcal{CA}^{\mathcal{E}}_Z$ and  $ c\mathcal{S}^{\mathcal{G}}_Z$ for the model categories of objects over $Z$ with coaugmentation under  $Z$. These are pointed model categories with $0$-object $Z$.

 All suspensions, loop spaces, products, smash and wedge products in  $c\mathcal{M}_Z^{\mathcal{F}}$ are taken in the derived sense and over and under the $0$-object  $Z$.
  \\Combining \ref{cube} with some of the main theorems in \cite{D.H} we obtain the following  application.\\

%\subsection{Overview and statement of results }

 \begin{theorem}\label{main application}  Let $X$, $Y$  be  objects in $c\mathcal{M}_Z^{\mathcal{F}}$.
 % $c\mathcal{CA}^{\mathcal{E}}_Z$ or in  $ c\mathcal{S}^{\mathcal{G}}_Z$.
  Assume that the structure maps $X\to Z$ and $Y\to Z$ are $0$-connected.
 Then there are  equivalences in $c\mathcal{M}^{\mathcal{F}}$
 
 \begin{itemize} 
 
\item[(1)] $\Sigma(X\times Y)\simeq (\Sigma X\wedge Y)\vee (\Sigma X)\vee (\Sigma Y)
 $\\
 \item[(2)] (James splitting theorem)
 $\Sigma\Omega \Sigma X\simeq \bigvee_{i\geq 1} \Sigma X^{\wedge i}$\\

 \item[(3)](Hilton-Milnor splitting )\\
  $\Omega\Sigma  (X\vee Y)\simeq
\Omega\Sigma X\times \Omega\Sigma Y\times \Omega (\bigvee_{i,j\geq 1}\Sigma X^{\wedge i}\wedge Y^{\wedge j}) 
 $ 
 \end{itemize}
 and  there are  homotopy fiber sequences
 \begin{itemize}

\item[(4)] $\Sigma\Omega X\to X\vee X\to X$\\

\item[(5)] $\Sigma(\Omega X \wedge \Omega Y) \to X\vee Y \to X \times Y$  \\
\item[(6)] $(\Omega Y\times X)/\Omega Y\to X\vee Y\to Y$\\
\end{itemize}
Let $F\to E\to X$ be a fibration sequence and  write $E\cup CF$ for the homotopy cofiber of the inclusion of the homotopy fiber $F$ in $E$.  Assume that  $F\to Z$ or $F\to E$ is $0$-connected. Then there is a homotopy fiber sequence
\begin{itemize}
\item[(7)] (Ganea's theorem ) $\Sigma F\wedge \Omega X\to E\cup CF\to X.$\\
\end{itemize}

%\item[(7)]  $\Sigma \Omega X\wedge  \Omega X\to \Sigma\Omega X\to X%$.

 \end{theorem}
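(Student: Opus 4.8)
The plan is to obtain (1)--(7) as consequences of the corresponding results of \cite{D.H}, which hold in any pointed $\infty$-category with finite limits and colimits in which the cube axiom is valid. I would work throughout in the $\infty$-category presented by the pointed model category $c\mathcal{M}^{\mathcal{F}}_Z$, whose limits and colimits are the homotopy limits and colimits and in which $\Sigma$, $\Omega$, $\wedge$, $\vee$ denote the derived functors fixed above; the cube axiom, being a statement about homotopy pushouts and pullbacks, transports to this setting. Theorem \ref{cube} furnishes that axiom, but only for cubes whose edge $B\to A$ is $0$-connected. Hence the one thing to supply beyond \cite{D.H} is the verification that, in each cube to which \cite{D.H} apply the cube axiom, the relevant edge is $0$-connected; the hypotheses on $X\to Z$, $Y\to Z$ and on $F$ are there precisely to guarantee this.

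For the fiber sequences (4)--(7) a single cube is used in each case, by a uniform pattern. One writes the target as a homotopy pushout whose initial vertex is the zero object or the homotopy fiber --- for instance $X\vee Y$ as the pushout of $X\leftarrow Z\to Y$, and $E\cup CF$ as the pushout of $E\leftarrow F\to Z$ --- maps the whole square to the terminal vertex of the desired fiber sequence, and forms the pullback cube of fibers over it. The cube axiom then identifies the induced top square as a homotopy pushout, and the resulting pushout (of two projections, or of a projection and a collapse) is a join or a relative quotient, yielding $\Sigma\Omega X$, $\Sigma(\Omega X\wedge\Omega Y)$, $(\Omega Y\times X)/\Omega Y$ and $\Sigma F\wedge\Omega X$ respectively. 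In each case I would identify the edge $B\to A$ of Theorem \ref{cube}: for (7) it is exactly $F\to E$ or $F\to Z$, so the hypothesis that one of these is $0$-connected licenses the cube directly; for (4)--(6) the initial vertex is the zero object $Z$ and the relevant edge emanates from $Z$, so its $0$-connectivity must be read off from the hypothesized $0$-connectivity of the structure maps $X\to Z$ and $Y\to Z$ by means of the pointed (retract) structure that relates a coaugmentation $Z\to X$ to its augmentation $X\to Z$.

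The splittings (1)--(3) I would then deduce by iterating these single-cube inputs as in \cite{D.H}: (1) follows by suspending the wedge--product comparison and splitting the resulting cofiber sequence $X\vee Y\to X\times Y\to X\wedge Y$, while the James splitting (2) and the Hilton--Milnor splitting (3) are assembled by gluing one further cube at each stage of the respective filtration. The additional point is that the $0$-connectivity needed to invoke Theorem \ref{cube} at every stage must persist along the iteration. I would arrange this by induction: the base case is the hypothesis that $X\to Z$ (resp.\ $Y\to Z$) is $0$-connected, and the inductive step uses that the derived suspension $\Sigma$ raises connectivity in $c\mathcal{M}^{\mathcal{F}}_Z$, so that the smash powers $\Sigma X^{\wedge i}$ and the edges attached at the higher stages are in fact more highly connected than required.

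The main obstacle is precisely this connectivity calculus in the resolution model structure. In \cite{D.H} the cube axiom is unconditional, whereas here every invocation must be justified by exhibiting a $0$-connected edge; I therefore need reliable estimates for how connectivity behaves under $\Sigma$, $\Omega$, $\vee$, $\wedge$ and homotopy cofibers, together with the comparison between the connectivity of an augmentation $X\to Z$ and of its coaugmentation $Z\to X$ forced by the retract structure. These estimates have to be established directly in terms of the natural homotopy groups $\pi^{s}$ of the resolution model structure, since $c\mathcal{M}^{\mathcal{F}}$ is not known to be cofibrantly generated and the usual cell-by-cell arguments are unavailable. Once they are in hand, the passage from \cite{D.H} to (1)--(7) is formal.
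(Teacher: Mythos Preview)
Your approach is the paper's approach: pass to the underlying pointed $\infty$-category of $c\mathcal{M}^{\mathcal{F}}_Z$, invoke the results of \cite{D.H}, and verify that every application of the cube axiom there only needs the restricted form of Theorem~\ref{cube}. The paper compresses this into a single sentence (``one simply checks that under the connectivity assumptions made the proofs in \cite{D.H} go through \ldots''), together with the observation that (4) and (6) follow from universality of the relevant pushouts (Lemma~\ref{unipush}).

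Two remarks on where your proposal diverges from what is actually needed:

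\medskip
\noindent\textbf{The edges from $Z$.} For (4)--(6) you plan to derive the $0$-connectivity of the coaugmentation $Z\to X$ from the hypothesis that $X\to Z$ is $0$-connected, via the retract structure. This works, but it is not where the hypothesis is spent: the map $Z\to X$ is $0$-connected for \emph{any} pointed object, as recorded just after Definition~\ref{def. n-cocon.}. The hypotheses $X\to Z$, $Y\to Z$ $0$-connected are needed elsewhere, namely in the cubes underlying the equivalence $\Sigma(X\wedge Y)\simeq(\Sigma X)\wedge Y$ (cf.\ \cite[2.23]{D.H}), where the critical edge is $Y\to Z$ itself.

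\medskip
\noindent\textbf{The connectivity calculus.} You identify as the ``main obstacle'' an inductive connectivity analysis for the iterated cubes in (2) and (3): estimates for $\Sigma$, $\Omega$, $\vee$, $\wedge$, and so on. The paper does \emph{not} do this for Theorem~\ref{main application}; the point is that upon inspecting the proofs in \cite{D.H}, every cube that appears has its $B\to A$ edge equal either to one of the structure maps $X\to Z$, $Y\to Z$ (covered by hypothesis) or to a map out of the zero object (automatic). No inductive propagation of connectivity through smash powers is required. The connectivity estimates you sketch are exactly what is developed in Section~5, but they are reserved for Theorem~\ref{traditional HM}, where a genuine convergence argument over the infinite product of basic words is needed. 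For Theorem~\ref{main application} itself the check is bookkeeping, not analysis.
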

 
In  the usual form of the Hilton-Milnor theorem the right hand side  of (3) in  \ref{main application} is further decomposed into an infinite product. We prove such a version in \ref{traditional HM} below.  This needs a convergence property which is implied by some connectivity estimates which are derived  from the dual Blakers-Massey theorem in \cite{BRS}.   

\begin{theorem}\label{traditional HM}  Let $X$, $Y$  be as in \ref{main application}.
% Assume that  $\pi^{*}(Z)\cong C$ or respectivly  $\pi^{*} H_{\star}(Z)\cong C$ is concentrated in $*$-degree $0$.
 Then there is an equivalence in $c\mathcal{M}^{\mathcal{F}}$
\[\Omega\Sigma  (X\vee Y)\simeq \prod_{w} \Omega \Sigma w(X,Y)\]
where $w$ runs through all basic words in $(x,y)$ and $w(X,Y)$ is the iterated smash power of $X,Y$ defined by $w$.

\end{theorem}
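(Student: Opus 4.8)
The plan is to obtain the full decomposition by iterating the first-order splitting of \ref{main application}(3), organised according to the weight of the basic words, and then passing to the limit. First I would fix an enumeration of the basic words in $(x,y)$ by non-decreasing weight and prove, by induction on $N$, a partial splitting
\[
\Omega\Sigma(X\vee Y)\;\simeq\;\Big(\prod_{\mathrm{wt}(w)\le N}\Omega\Sigma\,w(X,Y)\Big)\times\Omega\Sigma R_N,
\]
where $R_N$ is a wedge of smash products $X^{\wedge i}\wedge Y^{\wedge j}$ involving only exponents with $i+j>N$. The case $N=1$ is exactly \ref{main application}(3): the two weight-one words $x,y$ contribute $\Omega\Sigma X\times\Omega\Sigma Y$, and $R_1=\bigvee_{i,j\ge 1}X^{\wedge i}\wedge Y^{\wedge j}$. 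For the inductive step I would apply \ref{main application}(3) again to peel the lowest-weight summands off $R_N$, writing $R_N$ as the wedge of its weight-$(N+1)$ part and the rest; the splitting then separates the corresponding loop factors and throws the mixed smash terms into a new remainder $R_{N+1}$ supported in weights $>N+1$. The substance of this step is combinatorial bookkeeping: one must check that the weight-$(N+1)$ factors produced, together with those already accumulated, occur with multiplicity equal to the number of basic words of each type $(i,j)$. This is the same free Lie algebra / Hall basis computation (Witt's formula) that governs the classical Hilton-Milnor theorem; it is insensitive to the ambient category, using only the distributivity of $\wedge$ over $\vee$ and the splitting \ref{main application}(3).

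Second, I would control the connectivity of the remainders. Since the structure maps $X\to Z$ and $Y\to Z$ are $0$-connected, $X$ and $Y$ are $0$-connected objects of the pointed category $c\mathcal{M}_Z^{\mathcal{F}}$, and the dual Blakers-Massey estimates of \cite{BRS} show that in $c\mathcal{M}_Z^{\mathcal{F}}$ the smash product raises connectivity. Thus a word $w$ of weight $n$ has $w(X,Y)$ roughly $(n-1)$-connected, hence $\Omega\Sigma\,w(X,Y)$ roughly $(n-1)$-connected, and $R_N$ is about $N$-connected. Consequently, for every fixed degree $k$ only finitely many basic words $w$ contribute non-trivially to the natural homotopy in degree $k$, so the infinite product $\prod_w\Omega\Sigma\,w(X,Y)$ is, in each range of degrees, a finite product and agrees with the homotopy limit.

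Finally I would assemble the comparison map $\prod_w\Omega\Sigma\,w(X,Y)\to\Omega\Sigma(X\vee Y)$ by loop-multiplying the maps attached to each basic word, built from the iterated Whitehead product maps $\Sigma\,w(X,Y)\to\Sigma(X\vee Y)$ available in the presence of the cube axiom \cite{D.H}, and identify it with the limit of the partial equivalences above. Since the $N$-th partial splitting is an equivalence whose error term $\Omega\Sigma R_N$ becomes arbitrarily highly connected as $N\to\infty$, the comparison map induces an isomorphism on the natural homotopy groups in each degree; as weak equivalences in $c\mathcal{M}^{\mathcal{F}}$ are detected degreewise on these groups, it is a weak equivalence. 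The main obstacle is precisely this convergence step: because $c\mathcal{CA}^{\mathcal{E}}$ and $c\mathcal{S}^{\mathcal{G}}$ are not known to be cofibrantly generated, I cannot invoke standard tower and $\lim^1$ technology off the shelf, and I expect the real work to lie in showing that the (weak) product is the correct homotopy limit and that the unbounded connectivity of $R_N$ genuinely forces the comparison map to be an equivalence. The increasing connectivity furnished by \cite{BRS} is exactly what makes the relevant $\lim^1$-type obstructions vanish and lets the argument close.
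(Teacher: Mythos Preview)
Your outline is essentially the paper's own strategy: iterate a loop-space splitting, use the Hall basis combinatorics to account for the factors, and pass to the limit via connectivity estimates. The differences are organisational rather than mathematical, but they are worth noting.

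The paper does not iterate \ref{main application}(3). Instead it derives the one-variable splitting
\[
\Omega\Sigma(X_m\vee R'_m)\;\simeq\;\Omega\Sigma X_m\times\Omega\Sigma\Big(\bigvee_{i\ge 0}R'_m\wedge X_m^{\wedge i}\Big)
\]
(Lemma~\ref{conssplitting}, obtained from the James splitting and the fibre sequence (6)), and peels off \emph{one basic product at a time}, indexed by the Hall enumeration $x_1,x_2,\dots$, not by weight. With $R_m=\bigvee_{i\ge m,\ r(x_i)<m}X_i$ one gets $\Omega\Sigma R_m\simeq\Omega\Sigma X_m\times\Omega\Sigma R_{m+1}$ directly from the combinatorial identity in \cite[XI.6.1, p.~512]{Wh}. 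Your weight-by-weight scheme would require, at each step, splitting $\Omega\Sigma A_{N+1}$ for a \emph{wedge} $A_{N+1}$ and then checking via Witt's formula that the tally of factors matches; this can be made to work but is strictly more bookkeeping than the Milnor--Whitehead induction the paper follows.

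On convergence you correctly identify the obstacle. The paper discharges it with the Section~5 lemmas: \ref{conest2} gives $w(X,Y)$ connectivity growing with weight, \ref{convee} pushes this to the wedges $R_m$, \ref{omegacon} to $\Omega\Sigma(-)$, and---the point you flag as missing---\ref{inf prod} shows that an infinite product of $s$-connected objects in $c\mathcal{M}_Z^{\mathcal{F}}$ is $s$-connected. This last lemma is exactly what lets the comparison $\prod_m\Omega\Sigma X_m\to\Omega\Sigma(X\vee Y)$ be checked on $\pi^r$ for each fixed $r$ by truncating to a finite product; no tower/$\lim^1$ argument is needed.
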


 \begin{remark} Under the additional assumptions that the homotopy groups 
 $\pi_* (X),\pi_* (Y)$ are free  $\pi_0 (Z)$-modules and that $\pi_{*}(Z)$ is concentrated in degree $0$ a (dual) Hilton-Milnor theorem was established by Goerss in the category of simplicial unstable algebras \cite{G}. He noted that possibly this restriction may be omitted by restructuring the proof along the lines of Milnor's  original argument \cite{Mi}, \cite{Wh}. That is what we did. Computational applications of  his theorem  where given  in \cite{G2} and \cite{G3}. Many of them generalize due to \ref{traditional HM}. In particular,  \ref{traditional HM} helps in the computation of Andr\'{e}-Quillen cohomology of coabelian objects in $c\mathcal{CA}^{\mathcal{E}}$ which shows up in the $E_2$-term of the unstable Adams spectral sequence.
 % Suspensions in  $c\mathcal{Ca}^{\mathcal{Z}}$ are exactly the coabelin objects and these correspond to 
\end{remark}

\begin{remark} Many of our arguments extend to resolution model categories defined  by the spaces in the $\Omega$ spectrum of Morava K-theory. The unstable homology operations are also known due to the determination of the Hopf ring of Morava K-theory by Wilson \cite{W}. One ingredient which is missing so far is the homotopy excision theorem.
%I hope to  take this up in a paper in preparation.
\end{remark}

The paper is organized as follows. In section 2 we give background information on  the model categories $c\mathcal{CA}^{\mathcal{E}}$ and $c\mathcal{S}^{\mathcal{G}}$.  The proof of \ref{cube} is given  in section 3. Section 4 is devoted to the proof of \ref{main application}. Connectivity estimates for relative infinite products, wedge and smash products are derived in section 5. These are needed in the proof of
\ref{traditional HM}  which is the content of section 6.

\subsection{Conventions} Throughout the paper we fix a prime field $\mathbb{F}$.   We will use the following notation. 
\begin{itemize}
\item $\mathcal{S}$ = the category of simplicial sets;
\item $Vec$ = the category of non-negatively graded $\mathbb{F}$-vector spaces
\item  $\mathcal{CA}$ = the category of unstable coalgebras over the Steenrod algebra $\mathcal{A}$ over $\mathbb{F}$
\end{itemize}
For any category $\mathcal{C}$ we let
\begin{itemize}
\item  $c\mathcal{C}$ = the category of cosimplicial objects over  $\mathcal{C}$;
%\item  $n\mathcal{C}$ = the category of non-positively graded objects over $\mathcal{C}$.
\end{itemize}
%We denote the category of unstable coalgebras over the Steenrod algebra $\mathcal{A}$ over $\mathbb{F}$ by $\mathcal{CA}$ and the one of simplicial sets by $\mathcal{S}$. 
%For any category $\mathcal{C}$ the category of cosimplicial objects in $\mathcal{C}$ is written $c\mathcal{C}$ and the category of non neativaly graded objects in $\mathcal{C}$ as $n\mathcal{C}$.
We use freely notions and standard facts about model categories. Besides the founding \cite{Q} some widely used  references are \cite{Hi},\cite{H} and \cite{GJ}.
In section 5 we use the language of $\infty$-categories. The needed definitions and results are to be found in \cite{L}.

\subsection{Acknowledgments}
I would like to thank Hadrian Heine and Markus Spitzweck for patiently answering my questions on $\infty$-categorical issues.
Finally, I would like to thank the Deutsche Forschungsgemeinschaft for
support through the Schwerpunktprogramm 
1786 ``Homotopy theory and algebraic geometry''.
%This work was supported by
%the Deutsche Forschungsgemeinschaft  through the Schwerpunktprogramm 
%1786 ``Homotopy theory and algebraic geometry''.
% This work was supported by: EPSRC grant no EP/K032208/1.
%I would like to thank the Deutsche Forschungsgemeinschaft for
%support.
 
 %\section{The operads $T_{d,*}$} \medskip
%Conventiones:

%... (The main theorem has been anticipated by Molcho and Talpo,
%see the email to Manfred from April 13 2017.)

\section{Cosimplicial spaces and unstable coalgebras} \label{Res}

%Let $p$ be a prime or $0$.

\subsection{Resolution model categories}
The categories $\mathcal{S}$  and $\mathcal{CA}$  are equipped with the standard Quillen and the discrete model structure respectively.  
 Let $K(\mathbb{F},m)$ denote the Eilenberg-MacLane space of type $\mathbb{F}$. A product of those spaces will be called an $\mathbb{F}$-gem. Let $\mathcal{G}=\{K(\mathbb{F},m)|m\geq 0 \}$ 
 and $\mathcal{E}=\{H_* (K(\mathbb{F},m))|m\geq 0\}.$  The categories $c\mathcal{S}$ of cosimplicial spaces and cosimplicial unstable coalgebras $c\mathcal{CA}$  carry simplicial  resolution model category structures relative
to $\mathcal{G}$ and $\mathcal{E}$ respectively \cite{D.K.S.}, \cite{Bou}.  We explain the basic facts about the model categories $c\mathcal{S}^{\mathcal{G}}$ and $c\mathcal{CA}^{\mathcal{E}}$. As any resolution model category they carry an external simplicial structure.\\ 
The weak equivalences are the maps of cosimplicial objects 
\begin{equation}X\to Y\end{equation} which induce an isomorphism on $\pi_* [-,F]$ for every $F\in  \mathcal{F}$.
 This can be equivalently described as maps which induce an isomorphism on $\pi^* H_* (-)$ and $\pi^* (-)$ respectively. For  $\mathcal{E}$ this equivalence is due to the fact that objects in $\mathcal{E}$   are cofree unstable coalgebras. 

 The cofibrations are the Reedy cofibrations such that the induced homomorphism\begin{equation} [Y,F]\to[X,F]\end{equation} is a fibration of simplicial groups. A concrete description of the fibrations  can be found and will be of importance later on.
 \begin{definition}A map $i:A\to B$ in the homotopy category 
 $Ho(\mathcal{M})$ is called $\mathcal{F}$-monic if \[i^* :[B,F]\to [A,F]\]
 is surjective for al $F\in \mathcal{F}$.\\
 An object $Y\in Ho(\mathcal{M})$ is called $\mathcal{F}$-injective
 if 
 \[i^* :[B,Y]\to [A,Y]\] is surjective for all $\mathcal{F}$-monic 
 $i:A\to B$. 
 \end{definition}
 For $D\in c\mathcal{M}$  and $K\in \mathcal{S}$ we write $D\otimes K$ and $D^K$ for the tensor and cotensor.  Now we can describe the fibrations in $c\mathcal{M}^{\mathcal{F}}$. Recall from \cite[2.3.]{BRS}.
 %Information about its properties can be found in \cite[6.2.]{BRS}.\\

%\begin{definition} \label{def:G-cofree}
%An object in $c\mc{M}$ is called {\it \mc{G}-cofree} if it is %codegeneracy cofree on a sequence 
%$(G_i)_{i\ge 0}$ of fibrant \mc{G}-injective objects.  A map $f: X^\bu \to Y^\bu$ is called {\it\mc{G}-cofree} 
%if it is a codegeneracy cofree map, as defined in Definition \ref{def:codeg-cofree}, where $F^\bu$ is a \mc{G}-cofree object.
%\end{definition}

%The class of \mc{G}-cofree maps is stable under pullbacks and \mc{G}-cofree objects are stable under products. Note that an object $X^\bu$ is \mc{G}-cofree 
%if and only if the map $X^\bu\to\ast$ is \mc{G}-cofree. The following proposition explains the analogy to relative cell complexes.

\begin{definition}\label{co-cell decomposition of cofree maps}
A map $f:X\to Y$ in $c\mathcal{M}$ is quasi-$\mathcal{F}$-cofree if there is a sequence of fibrant $\mathcal{F}$-injective objects $(F_s)_{s\ge 0}$ such that  for all $s\ge 0$, there are homotopy  pullback diagrams in the Reedy model structure of $c\mathcal{M}$
\[ 
\xymatrix{
 cosk_s(f) \ar[r]\ar[d]_{\gamma_{s-1}(f)} & (cF_s)^{\Delta^s} \ar[d]^{i_s^*} \\
        cosk_{s-1}(f) \ar[r]   & (cF_s)^{\partial\Delta^s}}\]
where $i_s :\partial\Delta^s\to\Delta^s$ is the canonical inclusion.

\end{definition}

 \begin{definition}\label{co-cell } Let $F$ be a fibrant $\mathcal{F}$-injective. A  a Reedy homotopy pullback diagram in $c\mathcal{M}$
 \[ 
\xymatrix{
 E \ar[r]\ar[d]_{p} & (cF)^{\Delta^s} \ar[d]^{i_s^*} \\
        D \ar[r]^{f}   & (cF)^{\partial\Delta^s}}\]
        is called a   quasi-cocell attachment of dimension $s$.
        The map $f$ is called the coattaching map.
 \end{definition}
 A map in $c\mathcal{M}^{\mathcal{F}}$  is a fibration if and only if it is a Reedy fibration and a retract of a  quasi-cofree map \cite[corollary 2.3.12]{BRS}.
 We  recall the definition of the external loop functor.
 
\begin{definition}\label{external-loops}
The collapsed boundary gives the $n$-sphere $S^n:=\Delta^n/\partial\Delta^n$ a canonical basepoint.
If $X$ is pointed in $c\mathcal{M}$ we define the  $s$-th external loop object by
   $$ \Omega^n_{ ext}(X)=\overline{hom}(\Delta^n/\partial\Delta^n, X) $$ where  
  $$ \overline{hom}( \Delta^n/\partial\Delta^n,X) = fib[hom(\Delta^n/\partial\Delta^n,X)\to\hom(\Delta^0,X)\cong Y]\in c\mathcal{M}_* $$
as the fiber taken at the basepoint of $X$ of the map induced by the basepoint of $\Delta^n/\partial\Delta^n$.\end{definition}
   
We will often omit the subscript ext when the reference to the external structure is clear from the context.\\

 The following observation from \cite{BRS} will be useful to us later on. Let $E$ be as in \ref{co-cell }. Then  each cosimplicial degree $t$ there is an isomorphism
\begin{equation}E^t \cong D^t \times \Omega^s (cF)^t. 
\end{equation}

 As a consequence of homotopy excision the model category $c\mathcal{M}^{\mathcal{F}}$ is proper \cite[4.4.2 6.2.5]{BRS}.\\
  We recall the notion of cosimplicial connectivity.

 \begin{definition}\label{def. n-cocon.}
Let $n \geq 0$. 
\begin{itemize}
\item[(a)]A map
   $f:X\to Y$ in $c\mathcal{M}^{\mathcal{F}}$ is called $n$-connected if the induced map
  \[ \pi_{s}[Y,F]_{\mathcal{M}}\to\pi_{s}[X,F]_{\mathcal{M}} \]
is an isomorphism for $0 < s < n$ and an epimorphism for $s=n$ for all $F
\in \mathcal{F}$
\item[(b)] A {\it pointed object $(C, \ast)$  of $c\mathcal{M}^{\mathcal{F}}$ is cosimplicially $n$-connected} if the map $\ast \to C$ is cosimplicially $n$-connected.
\item[(c)] An arbitrary object $X$ of $c\mathcal{M}^{\mathcal{F}}$ is called   $n$-connected if the canonical map $X \to \ast$ is cosimplicially $n$-connected.
\end{itemize}
\end{definition}

A pointed object $(C, \ast)$  of $c\mathcal{M}^{\mathcal{F}}$ is  $n$-connected if and only if $C$ is cosimplicially $(n-1)$-connected.
Every pointed object is cosimplicially $0$-connected.\\

The next result from \cite{BRS} is used to reduce questions     in $c\mathcal{S}^{\mathcal{G}}$ to questions in the simpler category $c\mathcal{CA}^{\mathcal{E}}$.
 
% The singular homology functor connects the   model categories $c\mathcal{CA}^{\mathcal{E}}$ and  $c\mathcal{S}^{\mathcal{G}}$.

\begin{lemma}\label{Hsquare}The homology functor $H_*: c\mathcal{S}^{\mathcal{G}} \to c\mathcal{CA}^{\mathcal{E}}$  sends homotopy pullbacks into homotopy pullbacks and homotopy puhouts along maps one of which is $0$-connected to homotopy pushouts \cite[Proposition 6.17.,Corollary 6.2.3.]{BRS}.\end{lemma}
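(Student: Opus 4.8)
The plan is to exploit that the two model structures are set up precisely so that $H_*$ both detects and preserves weak equivalences: a map $f$ in $c\mathcal{S}^{\mathcal{G}}$ is a weak equivalence exactly when it induces an isomorphism on $\pi^* H_*(-)$, which is the same as asking that $H_*(f)$ be an $\mathcal{E}$-equivalence in $c\mathcal{CA}^{\mathcal{E}}$. Thus $H_*$ is homotopical, and it suffices to compute its effect on strict models for homotopy pullbacks and pushouts. The single computational engine is the K\"unneth theorem over the prime field $\mathbb{F}$: since the tensor product of cocommutative unstable coalgebras is their categorical product, the isomorphism $H_*(X\times Y)\cong H_*(X)\otimes H_*(Y)$ says precisely that $H_*$ preserves finite products. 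Because $H_*K(\mathbb{F},m)\in\mathcal{E}$, the functor also carries the constant cosimplicial gems $cF$ that generate the fibrations of $c\mathcal{S}^{\mathcal{G}}$ to the constant objects $c\,H_*F$ that generate the fibrations of $c\mathcal{CA}^{\mathcal{E}}$.

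For the pullback assertion I would first replace one leg of the cospan by a fibration, i.e.\ by a retract of a quasi-$\mathcal{F}$-cofree map (Definition \ref{co-cell decomposition of cofree maps}), and form the strict pullback, which then models the homotopy pullback. Since retracts are preserved by any functor, I may assume the fibration $E\to D$ is itself quasi-cofree. The crucial structural input is the degreewise splitting recorded after Definition \ref{co-cell}, namely $E^t\cong D^t\times\Omega^s_{\mathrm{ext}}(cF)^t$ for each cocell attachment. Forming the strict pullback along any $C\to D$ preserves this product shape degreewise, $(C\times_D E)^t\cong C^t\times\Omega^s_{\mathrm{ext}}(cF)^t$, so that K\"unneth turns the entire diagram into a diagram of tensor factorizations. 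As $H_*$ is compatible with the external cotensor and loop constructions on the constant gems $cF$, which appear degreewise only as product factors, it sends the quasi-cofree tower defining $E\to D$ to the quasi-cofree tower defining a fibration $H_*E\to H_*D$ in $c\mathcal{CA}^{\mathcal{E}}$, and it carries the strict pullback to the strict pullback. Since the coskeletal tower defining a quasi-cofree map stabilizes in each fixed cosimplicial degree, $H_*$ commutes with the inverse limit degreewise, and one concludes that $H_*$ of the homotopy pullback is the homotopy pullback of the homologies.

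The pushout statement is where the real work, and the hypothesis of $0$-connectivity, enters, and I expect it to be the main obstacle. Degreewise a strict pushout of cosimplicial spaces is a levelwise pushout of spaces, and $H_*$ does not preserve such pushouts in general: the wedge-type contributions produce direct sums rather than the coalgebra pushout, and the Mayer--Vietoris sequence assembles the homology correctly only when the relevant excision holds. My plan is therefore to reduce the pushout assertion to the already-established pullback assertion by means of the dual Blakers--Massey (homotopy excision) theorem available in these categories, the same input that makes $c\mathcal{M}^{\mathcal{F}}$ proper \cite{BRS}. For a homotopy pushout one of whose legs is $0$-connected, homotopy excision identifies the comparison between the pushout and the associated cartesian construction in a range that the connectivity hypothesis forces to be an equivalence; feeding this through the compatibility of $H_*$ with homotopy pullbacks, and checking that the resulting Mayer--Vietoris assembly collapses to a genuine pushout of unstable coalgebras, yields the claim. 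The delicate point, and the reason the hypothesis cannot simply be dropped, is exactly verifying that $0$-connectivity makes the excision comparison an honest equivalence at the level of the coalgebra structure rather than merely of underlying graded vector spaces.
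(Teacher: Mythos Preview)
The paper does not give its own proof of this lemma: it is stated with a bare citation to \cite[Proposition 6.1.7., Corollary 6.2.3.]{BRS}, so there is no in-paper argument to compare against directly. Your pullback sketch---reducing to quasi-cofree maps, using the degreewise product splitting $E^t\cong D^t\times\Omega^s(cF)^t$, and invoking the K\"unneth isomorphism over $\mathbb{F}$---is essentially the right mechanism and is almost certainly what \cite{BRS} does for the pullback half.

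Your pushout sketch, however, has a real gap. You propose to ``reduce the pushout assertion to the already-established pullback assertion by means of the dual Blakers--Massey (homotopy excision) theorem.'' But excision only yields a \emph{connectivity range} for the comparison map between a pushout and the associated cartesian replacement; with one leg merely $0$-connected the range you get is far too small to promote this comparison to an equivalence. So there is no way to bootstrap the pushout statement from the pullback one via excision. (In fact the logical dependence in \cite{BRS} runs the other way: the homotopy excision theorems there use the compatibility of $H_*$ with pushouts and pullbacks as input, so invoking excision here would also risk circularity.)

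The argument you want instead is direct. Model the homotopy pushout by a strict pushout $P=A\sqcup_B C$ along a $\mathcal{G}$-cofibration $i\colon B\to A$ that is $0$-connected. Since a map in $c\mathcal{S}$ is $\mathcal{G}$-monic iff $H_*$ of it is $\mathcal{E}$-monic (i.e.\ degreewise injective), $H_*(i)$ is a cofibration in $c\mathcal{CA}^{\mathcal{E}}$, so $H_*(A)\oplus_{H_*(B)}H_*(C)$ is already a homotopy pushout there. The task is to show the comparison map $H_*(A)\oplus_{H_*(B)}H_*(C)\to H_*(P)$ is a $\pi^*$-isomorphism. This is a Mayer--Vietoris comparison: both sides sit in long exact sequences computing $\pi^*H_*$ from the pieces, and the $0$-connectivity hypothesis on $i$ is exactly what is needed to control the connecting homomorphisms and identify the two sequences. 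No appeal to excision is required, and the role of the hypothesis becomes transparent rather than ``delicate.''
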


Objects in $\mathcal{M}$ satisfy the following $\mathcal{F}$- flatness property with respect to the  product.
 
\begin{lemma} Let $X\in \mathcal{M}$ and $i:A\to B$ an $\mathcal{F}$-injective map in 
$Ho(\mathcal{M})$. Then $i\times 1_X:A\times X \to B\times X$ is   $\mathcal{F}$-injective.
\end{lemma}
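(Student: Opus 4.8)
The plan is to reduce the claim to the exactness of $-\otimes_{\mathbb{F}}V$ for an $\mathbb{F}$-vector space $V$, which is where the word \emph{flatness} comes from. The unifying remark is that for every $F\in\mathcal{F}$ the contravariant functor $[-,F]$ on $Ho(\mathcal{M})$ is the $\mathbb{F}$-linear dual of a covariant ``homology'' functor $h_{\ast}$, and that $h_{\ast}$ is monoidal for the product of $\mathcal{M}$. Throughout I read the hypothesis on $i\colon A\to B$ as the condition that $i^{\ast}\colon[B,F]\to[A,F]$ be surjective for all $F\in\mathcal{F}$ (i.e. $i$ is $\mathcal{F}$-monic).

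First I would pin down $[-,F]$. For $\mathcal{M}=\mathcal{S}$ and $F=K(\mathbb{F},m)$ one has $[W,F]\cong H^{m}(W;\mathbb{F})\cong(H_{m}(W;\mathbb{F}))^{\ast}$, so $[-,F]=(h_{m})^{\ast}$ with $h_{m}:=H_{m}(-;\mathbb{F})$. For $\mathcal{M}=\mathcal{CA}$ and $F=H_{\ast}(K(\mathbb{F},m))$, cofreeness of the objects of $\mathcal{E}$ (already recorded in the text) gives a natural isomorphism $[C,F]\cong\mathrm{Hom}_{\mathbb{F}}(C_{m},\mathbb{F})=(h_{m}C)^{\ast}$, where $h_{m}C:=C_{m}$ is the degree-$m$ part of the underlying graded vector space. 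In either case, since over a field the dual of a linear map is surjective exactly when the map is injective, a map $i$ is $\mathcal{F}$-monic if and only if $h_{m}(i)$ is injective for all $m$.

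Next I would record the monoidality $h_{\ast}(A\times X)\cong h_{\ast}(A)\otimes_{\mathbb{F}}h_{\ast}(X)$. For $\mathcal{S}$ this is the Künneth isomorphism, which has no $\mathrm{Tor}$-terms because $\mathbb{F}$ is a field. For $\mathcal{CA}$ it rests on the fact that the categorical product of cocommutative coalgebras is computed by the tensor product (with projections induced by the counits), so that $h_{\ast}(A\times X)=A\otimes_{\mathbb{F}}X$ on underlying graded vector spaces. Under these identifications the map induced by $i\times 1_{X}$ becomes $h_{\ast}(i)\otimes 1_{h_{\ast}(X)}$.

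Finally I would assemble the two steps. As $i$ is $\mathcal{F}$-monic, each $h_{m}(i)$ is injective; since $\mathbb{F}$ is a field the functor $-\otimes_{\mathbb{F}}h_{\ast}(X)$ is exact, so $h_{\ast}(i)\otimes 1_{h_{\ast}(X)}$ is again a degreewise injection, i.e. $h_{m}(i\times 1_{X})$ is injective for all $m$. Dualizing gives that $(i\times 1_{X})^{\ast}\colon[B\times X,F]\to[A\times X,F]$ is surjective for every $F\in\mathcal{F}$, which is exactly the assertion. I do not expect a serious obstacle here: the only points needing care are the structural identifications behind $[-,F]\cong(h_{m})^{\ast}$ — cofreeness in the coalgebra case — and the identification of the product in $\mathcal{CA}$ with the tensor product, after which the exactness of $-\otimes_{\mathbb{F}}h_{\ast}(X)$ does all the work.
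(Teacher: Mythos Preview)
Your proposal is correct and follows essentially the same route as the paper: in $\mathcal{CA}$ one uses that $\mathcal{E}$-monic means injective on underlying vector spaces and that the product is the tensor product (hence exact), while in $\mathcal{S}$ one reduces to the coalgebra case via $H_*$ (equivalently, your Künneth argument). The only cosmetic difference is that the paper phrases the space case as ``$i$ is $\mathcal{G}$-monic iff $H_*(i)$ is $\mathcal{E}$-monic'' and then invokes the $\mathcal{CA}$ result, whereas you unpack this directly via $[W,K(\mathbb{F},m)]\cong H_m(W;\mathbb{F})^*$ and Künneth.
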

\begin{proof} Suppose first that   $\mathcal{M}=\mathcal{CA}$. In this case a map is  $\mathcal{E}$-injective if and only if it is injective. This follows from the fact that the homology of an Eilenberg-MacLane space of type $\mathbb{F}$ is cofree. The product in $\mathcal{CA}$ is the tensor product which is an exact functor of the underlying graded vector spaces.\\
In case $\mathcal{M}=\mathcal{S}$ note that a map $i:A\to B$ in  $Ho(\mathcal{S})$ is $\mathcal{G}$-injective if and only if $H_* (i)$ is $\mathcal{E}$-injective in  $Ho(\mathcal{CA})$ \cite[p.68]{BRS}.$\square$ 
\end{proof}

%\begin{lemma}\label{Hsquare}The homology functor $H_*: c\mathcal{S}^{\mathcal{G}} \to c\mathcal{CA}^{\mathcal{E}}$ sends quasi-$\mathcal{G}$-cofree maps to 
% $\mathcal{E}$-cofree maps and $\mathcal{G}$-fibrations to $\mathcal{E}$-fibrations. Moreover, $H_*$ sends 
% pullbacks along a quasi-$\mathcal{G}$-cofree Reedy fibration to pullbacks.\\ 
% Let 
%\[
% \xymatrix{
% X \ar[d] \ar[r]^i & Y \ar[d] \\
% W \ar[r] & Z
% }
%\]
%be a homotopy pushout square in $c \mathcal{S}^{\mathcal{G}}$ where the map $i$ is $0$-connected. 
%Then the square 
%\[
% \xymatrix{
%H_*(X) \ar[r]^{H_*(i)} \ar[d] & H_*(Y) \ar[d] \\
%H_*(W) \ar[r] & H_*(Z)
% }
%\]
%is a homotopy pushout square in  $c \mathcal{CA}^{\mathcal{E}}$.
%\end{lemma}
%\begin{lemma} 
%\end{lemma}

\section{The cube theorem in  $c\mathcal{CA}^{\mathcal{E}}$ and  $c\mathcal{S}^{\mathcal{G}}$}

\begin{definition}\label{cube def} Let  $\mathcal{N}$ be a model category. We say that the cube axiom holds in $\mathcal{N}$ if in any commutative cube in $\mathcal{N}$
      \[
\xymatrix@!0{
& E_A \ar[dl]\ar'[d][dd]
& &  E_B \ar[dd]\ar[dl]\ar[ll]
\\
E_D \ar[dd]
& & E_C \ar[ll]\ar[dd]
\\
& A \ar[dl]
& & B \ar[dl]\ar'[l][ll]
\\
D 
& & C\ar[ll] 
}
\]
 whose bottom square is a homotopy pushout and whose vertical squares are homotopy pull backs the top square is a homotopy push out.\\

\end{definition}

\begin{proposition}\label{cube lemma}
% Let $p:E_D\to D$ be a quasi-cocell attachment in $c\mathcal{M}^{\mathcal{F}}$  with coattaching map $f$ and cocell $(cF)^{\Delta^s}$.\\
        Let $C$
be a cube as in \ref{cube def} where $p:E_D\to D$ be a quasi-cocell attachment in $c\mathcal{M}^{\mathcal{F}}$  with coattaching map $f$ and cocell $(cF)^{\Delta^s}$.  Assume that $B\to A$ is $0$-connected.

        Then $E_D$ is the homotopy pushout of  $ E_A \to  E_B \leftarrow E_C$ in  $c\mathcal{M}^{\mathcal{F}}$. Moreover, $j:E_B \to E_A$ is $0$-connected.\end{proposition}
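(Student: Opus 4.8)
The plan is to recognise the whole cube as the base change of the single fibration $p\colon E_D\to D$ along the pushout on the bottom, and then to exploit the degreewise product splitting of a quasi-cocell attachment. First I would put the cube in a strict model. Since $p$ is (up to weak equivalence) a Reedy fibration, right properness lets me take the three remaining homotopy pullbacks to be strict pullbacks of $p$; reading off the left, front and back/right faces gives
\[E_A\simeq A\times_D E_D,\quad E_C\simeq C\times_D E_D,\quad E_B\simeq B\times_A E_A\simeq B\times_D E_D,\]
so all four corners of the top are base changes of $p$ along $A,B,C\to D$. Pasting the defining square of the quasi-cocell attachment shows each $E_X\to X$ (for $X\in\{A,B,C\}$) is again a quasi-cocell attachment with the same cocell $(cF)^{\Delta^s}$; hence the degreewise splitting $E^t\cong D^t\times\Omega^s(cF)^t$ recalled above applies and yields, in every cosimplicial degree $t$, a natural isomorphism $E_X^t\cong X^t\times W^t$ with $W:=\Omega^s(cF)$, under which $E_X\to X$ becomes the projection. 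To realise the bottom as a strict pushout I would also factor $B\to A$ as a cofibration followed by a trivial fibration and rebuild $D=A\cup_B C$ as a strict pushout; by left properness and the hypothesis that $B\to A$ is $0$-connected this does not change the homotopy type of the cube.

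The heart of the argument is then formal. Colimits of cosimplicial objects are computed degreewise, and in each degree $-\times W^t$ preserves pushouts: in $c\mathcal{S}^{\mathcal{G}}$ because it is a left adjoint, and in $c\mathcal{CA}^{\mathcal{E}}$ because the product is the tensor product, which is exact on underlying graded vector spaces. Consequently the canonical comparison map $E_A\cup_{E_B}E_C\to E_D$ is in degree $t$ the isomorphism $(A^t\cup_{B^t}C^t)\times W^t\cong D^t\times W^t$, hence an isomorphism of cosimplicial objects, so the top square is a strict pushout. To upgrade this to a homotopy pushout I would show that $E_B\to E_A$ is a cofibration in $c\mathcal{M}^{\mathcal{F}}$: degreewise it is the monomorphism $(B^t\to A^t)\times\mathrm{id}_{W^t}$, giving the Reedy condition, while the requirement that $[E_A,F]\to[E_B,F]$ be a fibration of simplicial groups follows from the $\mathcal{F}$-flatness lemma of Section~2 (products with a fixed object preserve $\mathcal{F}$-injective maps). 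Since $c\mathcal{M}^{\mathcal{F}}$ is proper, a strict pushout along a cofibration is a homotopy pushout, which is the first assertion. I expect verifying this cofibrancy, together with the strictification bookkeeping needed to keep $p$ a fibration while making the bottom a genuine pushout, to be the main obstacle.

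Finally, for the $0$-connectivity of $j\colon E_B\to E_A$ I would use the back-face pullback $E_B\simeq B\times_A E_A$: since $E_A\to A$ is a fibration, $j$ is the base change of $B\to A$ and therefore has the same homotopy fibre. As $B\to A$ is $0$-connected, the description of connectivity through $\pi_*[-,F]$ and the fibre sequences supplied by the connectivity machinery of \cite{BRS} show that $j$ is $0$-connected as well.
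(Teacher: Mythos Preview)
Your argument for $c\mathcal{CA}^{\mathcal{E}}$ is essentially the paper's: both use the degreewise splitting $E_X^t\cong X^t\otimes\Omega^s(cF)^t$ to identify the strict pushout with $E_D$, and then observe that $j$ is a cofibration because it is degreewise injective. In $c\mathcal{CA}^{\mathcal{E}}$ this suffices because the model structure on $\mathcal{CA}$ is discrete, so \emph{every} map of cosimplicial objects is a Reedy cofibration and the residual cofibration condition is exactly degreewise injectivity.

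The gap is in your treatment of $c\mathcal{S}^{\mathcal{G}}$. You claim $E_B\to E_A$ is a Reedy cofibration because it is a degreewise monomorphism, but this inference fails for cosimplicial simplicial sets: Reedy cofibrancy asks that the relative latching maps $L^nE_A\cup_{L^nE_B}E_B^n\to E_A^n$ be monomorphisms, and the degreewise splitting $E_X^t\cong X^t\times W^t$ is \emph{not} an isomorphism of cosimplicial objects (the coface maps twist the $W$-factor through the coattaching data), so you cannot transport the latching condition from $B\to A$. Without $j$ a genuine cofibration you have only identified the strict pushout, not the homotopy pushout. The paper does not attempt to show $j$ is a cofibration in $c\mathcal{S}^{\mathcal{G}}$; instead it factors $j=w\circ k$ with $k$ a cofibration, forms the homotopy pushout $P$, applies $H_*$ to the entire cube, uses Lemma~\ref{Hsquare} to see that the resulting cube in $c\mathcal{CA}^{\mathcal{E}}$ again has bottom a homotopy pushout and vertical faces homotopy pullbacks, and then invokes the already-proven coalgebra case to conclude $H_*(P)\to H_*(E_D)$ is an equivalence. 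This reduction via $H_*$ is the step you are missing.

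Your $0$-connectivity argument for $j$ is fine and in fact a bit cleaner than the paper's: the back face exhibits $j$ as the base change of $B\to A$ along $E_A\to A$, and Lemma~\ref{0pull} gives precisely that base change preserves $0$-connectedness. Just replace ``same homotopy fibre'' by an appeal to that lemma, since connectivity here is defined through $\pi_*[-,F]$ rather than classical fibres.
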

 % The proof boils down to the fact that tensor products commute with colimits and the formula $C\square_C D\cong D$. We also use the fact that  pullbacks in $\mathcal{CA}$ are given by the cotensor product 
%  \cite[remark p.46]{BRS} and that colimits are computed in the undelying vector spaces. In more detail:

  \begin{proof}  
  We may assume that all the vertical squares are pullbacks along fibrations
and   that $D\cong A\sqcup_B C$ is a pushout along two cofibrations in  $c\mathcal{M}^{\mathcal{F}}$ \cite[Theorem A.1]{Do}.
 Let us start with the case  $c\mathcal{M}^{\mathcal{F}}=c\mathcal{CA}^{\mathcal{E}}$. Recall that  pullbacks in $\mathcal{CA}$ are given by the cotensor product 
  \cite[remark p.46]{BRS} and that colimits are computed in the underlying vector spaces.  
    There is a natural map 
  \[q: E_A \oplus_{E_B}E_C\to E_D .\]

  In each cosimplcial degree $n$ there  are isomorphisms
  
 \[((cE)^{\Delta^s})^n\cong \otimes_{\sigma\in \Delta^s_n} E 
  \cong (\otimes_{\sigma\in \partial\Delta^s_n} E )\otimes  \Omega^s (E) ) \] 
In each cosimplicial degree $n$  the map $q^n$  is the chain of isomorphism

\[\scriptstyle (E_A )^n \oplus_{(E_B )^n } (E_C )^n \cong\]
\[\scriptstyle( (A^n )\square_{ ((cF)^{\partial \Delta^s })^n} ((cF)^{ \Delta^s })^n )\oplus_{((B^n )\square_{ ((cF)^{\partial \Delta^s })^n} ((cF)^{ \Delta^s })^n )}
 ((C^n )\square_{ ((cF)^{\partial \Delta^s })^n}((cF)^{ \Delta^s })^n )\cong \]
 \[\scriptstyle( (A^n )\otimes (\Omega^s (F))^n )\oplus_{( (B^n )\otimes (\Omega^s (F))^n )} ( (C^n )\otimes (\Omega^s (F))^n )\cong \]
 \[\scriptstyle ( (A^n )\oplus_{ (B^n )}  (C^n ))\otimes (\Omega^s (F))^n )\cong E_D^n\]

% It follows that $q$ is a Reedy equivalence and hence a weak equivalence in $c\mathcal{CA}^{\mathcal{E}}$.
 We claim that 
 $j:E_B \to E_A$ is a 
 $0$-connected
  cofibration in $c\mathcal{CA}^{\mathcal{E}}$.
  %For this we must show that the map induced on the linear duals $j^*:E_A^* \to E_B^*$ is a fibration of simplical vector spaces.
 % By \cite[Proposition 1. 3.8]{Q} this the case if and only if 
  
  %\[(j^* ,\epsilon):E_A^* \to E_B^* \times_{ K(\pi_0 (E_A^*))} K(\pi_0 (E_B^*))\]
 % is surjective in each simplicial degree.
 % Where for a group $A$ we write $c(A,0)$ for the constant simplical group $A$ and $\epsilon$ is the adjunction map.
 % By duality, this condition is equivalent to the injectivity of 
 % \[(j ,\eta): E_B \oplus_{ c(\pi^0 (E_B))} c(\pi^0 (E_A))\to E_A\]  
  %where now $c(A,0)$ is the constant cosimplicial group and $\eta$ is the natural adjunction inclusion.
  
   We have to show  that $j^n$  is $\mathcal{E}$- injective that is to say injective for all $n$.  But $j^n$ is isomorphic to the map \[A^n\otimes  \Omega^s (E)^n \to B^n\otimes \Omega^s (E)^n  \] which is injective since $A\to B$ is. So $E_D \cong E_A \oplus_{E_B}E_C $ is a homotopy pushout.
 This completes the proof for  $c\mathcal{CA}^{\mathcal{E}}$.\\
 
 We turn to  $c\mathcal{S}^{\mathcal{G}}$.   Factor $j$ as  $E_B\stackrel{k}{\to}\bar{E}_A\stackrel{w}{\to}E_B$ in  $c\mathcal{S}^{\mathcal{G}}$ in a cofibration $k$ and a trivial fibration $w$. We have to see that the induced map  from the (homotopy)pushout  $P=\bar{E_A} \sqcup_{E_B} E_C \to E_D$ is a weak equivalence.

  Apply the homology functor to the given cube $\mathcal{C}$. The  cube $H_* \mathcal{C}$ so obtained in  $c\mathcal{CA}^{\mathcal{E}}$ has bottom square a homotopy pushout with $0$-connected  $H_*(B)\to H_* (A)$ and all vertical squares homotopy pullbacks by \ref{Hsquare} . The top square is a homotopy pushout, as we just have seen above, and the map $H_* (E_B )\to H_* (E_A )$   is $0$-connected. Hence the maps  $E_B \to E_A $ and $k$ are $0$-connected.\\
 By \ref{Hsquare} again $H_* (P)$ is a homotopy pushout. 
 It follows that the map $H_* (P)\to H_* (E_D)$ is a weak equivalence which is what we want. $\square$ \end{proof}

% \begin{lemma} An $M$  injective $C$-comodule is flat 
% \end{lemma}
% \begin{proof} This follows from the dual of \cite[5.1]{G} as in \cite[p.131-132]{G}. $\square$
% \end{proof}
 
%\begin{remark} At the prime $2$ there is a concrete description of the functor $\mathcal{G}$ in terms of Bousfield-Dwyer operations $\delta_i$ and Steenrod operations \cite{G}. Since at odd primes the needed form of the Adem relations for $\delta_i$ are not known the situation is not clear in general.
%\end{remark}
%\begin{lemma} Let $M$ be a flat $C$-comodule. Then $-\square_C M$ commutes with pushouts. 
% \end{lemma}

\begin{definition}\label{rescube} We say that the restricted cube axiom holds in 
if the assertion in \ref{cube def} holds for all cubes in which the map $B\to A$ is $0$-connected.
\end{definition}
 
 \begin{theorem}\label{CubeC} The restricted cube axiom holds in  $c\mathcal{M}^{\mathcal{F}}$.
 \end{theorem}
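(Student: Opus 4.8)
The plan is to bootstrap from Proposition \ref{cube lemma}, which already settles the case of a single quasi-cocell attachment, to an arbitrary fibration by a tower induction. Since in $c\mathcal{M}^{\mathcal{F}}$ every fibration is a retract of a quasi-$\mathcal{F}$-cofree map and the conclusion we want---that the comparison map $E_A \sqcup_{E_B}^h E_C \to E_D$ is a weak equivalence---is stable under retracts, I would first reduce to the case where the fibration $E_D \to D$ is quasi-$\mathcal{F}$-cofree. Exactly as in the proof of \ref{cube lemma}, invoking \cite[Theorem A.1]{Do}, I may also assume that the three vertical faces are honest pullbacks along fibrations and that $D = A \sqcup_B C$ is a strict pushout along cofibrations; in particular $E_A = A \times_D E_D$, $E_C = C \times_D E_D$ and $E_B = B \times_D E_D$.

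Next I would write $E_D \to D$ as the limit of its coskeletal tower $\{cosk_s\}_{s\ge -1}$ of Definition \ref{co-cell decomposition of cofree maps}, so that $cosk_{-1} = D$, $E_D = \lim_s cosk_s$, and each map $cosk_s \to cosk_{s-1}$ is a quasi-cocell attachment of dimension $s$ with cocell $(cF_s)^{\Delta^s}$. Pulling this entire tower back along $A \to D$, $B \to D$ and $C \to D$ produces four compatible towers $E_A^{(s)}, E_B^{(s)}, E_C^{(s)}, E_D^{(s)}$ with limits $E_A, E_B, E_C, E_D$. Because a quasi-cocell attachment is itself a homotopy pullback and homotopy pullbacks compose and are stable under base change, each $E_A^{(s)} \to E_A^{(s-1)}$, and likewise for $B$ and $C$, is again a quasi-cocell attachment of dimension $s$ with the same cocell. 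I would then induct on $s$, the hypothesis being that the $s$-th square $E_A^{(s)} \leftarrow E_B^{(s)} \to E_C^{(s)}$ with apex $E_D^{(s)}$ is a homotopy pushout and that $E_B^{(s)} \to E_A^{(s)}$ is $0$-connected. The base case $s = -1$ is precisely the hypotheses of the theorem. For the inductive step I apply Proposition \ref{cube lemma} to the cube whose bottom square is the level-$(s-1)$ square and whose vertical faces are the dimension-$s$ quasi-cocell attachments just described: its hypothesis that $E_B^{(s-1)} \to E_A^{(s-1)}$ is $0$-connected is supplied by the inductive hypothesis, and its conclusion yields both that the level-$s$ square is a homotopy pushout and that $E_B^{(s)} \to E_A^{(s)}$ is again $0$-connected, closing the induction.

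The one genuinely delicate point, and the step I expect to be the main obstacle, is the passage from these levelwise statements to the limit, since homotopy pushouts do not commute with tower limits in general. Here I would exploit the fact that a dimension-$s$ quasi-cocell attachment changes nothing in low cosimplicial degrees: from the isomorphism $E^t \cong D^t \times \Omega^s(cF)^t$ recorded after Definition \ref{co-cell } and the observation that $(\Delta^s/\partial\Delta^s)_t = \ast$ for $t < s$, the external loop object $\Omega^s(cF)^t$ is trivial for $t < s$, so $cosk_s \to cosk_{s-1}$ is an isomorphism in every cosimplicial degree $t < s$. Consequently each of the four towers is eventually constant in every fixed cosimplicial degree, agreeing there with its limit once $s > t$; the levelwise homotopy-pushout squares, which in $c\mathcal{CA}^{\mathcal{E}}$ are by \ref{cube lemma} strict degreewise pushouts along the cofibrations $E_B^{(s)} \to E_A^{(s)}$, therefore assemble degree by degree into the pushout $E_D \cong E_A \sqcup_{E_B} E_C$, and the $0$-connectivity of $E_B \to E_A$, being a condition on the stabilized low-degree data, is inherited in the limit. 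This proves the restricted cube axiom in $c\mathcal{CA}^{\mathcal{E}}$. Finally I would deduce the case $c\mathcal{S}^{\mathcal{G}}$ from the coalgebra case exactly as in the proof of \ref{cube lemma}: factor $E_B \to E_A$ as a cofibration followed by a trivial fibration, form the resulting strict pushout $P$, and use Lemma \ref{Hsquare} together with the fact that $H_*$ detects weak equivalences to reduce the assertion that $P \to E_D$ is an equivalence to the homology computation already carried out, the hypothesis of \ref{Hsquare} being guaranteed by the $0$-connectivity of $E_B \to E_A$.
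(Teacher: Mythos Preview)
Your proposal is correct and follows the same approach as the paper's proof, which is the one-line ``Using the fact that fibrations are retracts of quasi-cofree maps the assertion follows from the coskeletal tower by means of \ref{cube lemma}.'' You have supplied the details the paper suppresses: the retract reduction, the inductive application of Proposition~\ref{cube lemma} up the coskeletal tower (with the $0$-connectivity of $E_B^{(s)}\to E_A^{(s)}$ carried along to feed the next step), the convergence argument via eventual constancy of the towers in each cosimplicial degree, and the passage to $c\mathcal{S}^{\mathcal{G}}$ via $H_*$ and Lemma~\ref{Hsquare}. One small point of care: when you first reduce $E_D\to D$ to a quasi-cofree map and then invoke \cite[Theorem~A.1]{Do} to strictify the bottom pushout, the new $D$ may differ from the old; you should observe that the base change of a quasi-cofree map along $D'\to D$ is again quasi-cofree (since the defining Reedy homotopy pullback squares are stable under base change), so the two reductions are compatible.
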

 \begin{proof} Using the fact that fibrations are retracts of quasi-cofree maps 
 the assertion follows from the coskeletal tower by means of \ref{cube lemma}.
 % So we may write $E=\otimes_s \mathcal{E}_s \otimes D$.
% We find
% \[E=(\otimes_s\mathcal{E}_s ) \otimes D \cong (\otimes_s\mathcal{E}_s )\otimes (A \oplus _B C)\cong\]
% \[ ((\otimes_s\mathcal{E}_s )\otimes  A) \oplus_{((\otimes_s\mathcal{E}_s ) \otimes B)} ((\otimes_s\mathcal{E}_s ) \otimes C) \cong\]
% \[(E\square_D A)\oplus_{E\square_D B}(E\square_D C)\]
 $\square$
 \end{proof}

We record some invariance properties  of $0$-connected maps.

\begin{lemma}\label{0pull}Let
\[ 
\xymatrix{
 E \ar[r]\ar[d]^{q} & C \ar[d]^{p} \\
        A \ar[r]   & B}\] be a homotopy pullback diagram in $c \mathcal{M}^{\mathcal{F}}$.
        If p is $0$-connected so is $q$. 
\end{lemma}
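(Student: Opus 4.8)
The statement is that $0$-connectivity is preserved under homotopy pullback, so I want to show that if $p:C\to B$ is $0$-connected then its base change $q:E\to A$ is $0$-connected as well. By the definition of $0$-connectivity in \ref{def. n-cocon.}, I must verify that for every $F\in\mathcal{F}$ the induced map $\pi_s[A,F]\to\pi_s[E,F]$ is an isomorphism for $0<s<0$ (vacuous) and an epimorphism for $s=0$. Since the range $0<s<n$ is empty when $n=0$, the only real content is surjectivity of the map $\pi_0[A,F]\to\pi_0[E,F]$, equivalently that $q$ is $\mathcal{F}$-monic in the sense that $q^*:[A,F]\to[E,F]$ is surjective onto connected components. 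So the lemma reduces to the statement that the class of $\mathcal{F}$-monic maps (on $\pi_0$) is stable under homotopy pullback.

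The approach I would take is to pass through the contravariant functor $[-,F]$, which converts the homotopy pullback square in $c\mathcal{M}^{\mathcal{F}}$ into a homotopy pushout square of simplicial groups (or simplicial sets). This is essentially the defining adjunction of the resolution model structure recalled in Section \ref{Res}: cofibrations are detected by the fibrancy of $[-,F]$, and the external simplicial mapping spaces carry homotopy limits to homotopy colimits. Concretely, applying $[-,F]$ to the square
\[
\xymatrix{
 E \ar[r]\ar[d]^{q} & C \ar[d]^{p} \\
 A \ar[r] & B}
\]
should yield a homotopy pullback of mapping complexes, and the hypothesis that $p$ is $0$-connected says that $[B,F]\to[C,F]$ is surjective on $\pi_0$. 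I then want to deduce that $[A,F]\to[E,F]$ is surjective on $\pi_0$ by a direct diagram chase: given a component of $[E,F]$, its image in $[C,F]$ lifts along $p^*$ up to the necessary coherence, and the compatibility over $[B,F]$ lets me assemble a lift along $q^*$.

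The main obstacle, and the step I would check most carefully, is the exactness bookkeeping at the level of $\pi_0$. Connectivity here is phrased in terms of the homotopy groups $\pi_s[-,F]_{\mathcal{M}}$ of the external simplicial mapping objects, so I need the long exact (fiber) sequence associated to the homotopy pullback square after applying $[-,F]$, and I must be attentive that $\pi_0$ of a simplicial group is itself a group while the relevant surjectivity is about components. The cleanest route is probably to reduce to $c\mathcal{CA}^{\mathcal{E}}$ using Lemma \ref{Hsquare}, which tells me that $H_*$ sends homotopy pullbacks to homotopy pullbacks; a map in $c\mathcal{S}^{\mathcal{G}}$ is $0$-connected precisely when its image under $H_*$ is, so it suffices to treat the coalgebra case, where pullbacks are computed as cotensor products on underlying graded vector spaces and $\mathcal{E}$-monic means injective. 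In that algebraic setting the preservation of surjectivity (dually) under the relevant limit is transparent, and I would finish by transporting the conclusion back through $H_*$.
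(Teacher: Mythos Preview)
Your final ``cleanest route'' is exactly the paper's argument: for $c\mathcal{S}^{\mathcal{G}}$ one reduces to $c\mathcal{CA}^{\mathcal{E}}$ via Lemma~\ref{Hsquare}, using that $H_*$ preserves homotopy pullbacks and that a map in $c\mathcal{S}^{\mathcal{G}}$ is $0$-connected iff its image under $H_*$ is; for $c\mathcal{CA}^{\mathcal{E}}$ the paper simply cites \cite[Lemma~4.6.3]{BRS}.

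Two points deserve correction, however. First, your initial route through $[-,F]$ is not well-founded. You assert that applying $[-,F]$ to the homotopy pullback yields a homotopy pushout of simplicial groups, then a few lines later say it yields a ``homotopy pullback of mapping complexes''; neither is a consequence of the resolution model structure. What the structure guarantees is only that $[-,F]$ sends $\mathcal{F}$-cofibrations to Kan fibrations and $\mathcal{F}$-equivalences to weak equivalences; it says nothing about the behavior of $[-,F]$ on homotopy pullbacks. So the diagram chase you outline on $\pi_0$ has no square of the right shape to chase in.

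Second, the coalgebra case is not quite as ``transparent'' as you suggest. Being $0$-connected is a condition on $\pi^0$ of the cosimplicial object (dually, on $\pi_0$ of the simplicial group $[-,F]$), not the condition of being degreewise injective; the identification ``$\mathcal{E}$-monic $=$ injective'' lives in $\mathcal{CA}$, not in $c\mathcal{CA}^{\mathcal{E}}$. One really needs an argument relating $\pi^0$ of the cotensor product to $\pi^0$ of the factors, e.g.\ the K\"unneth spectral sequence of \cite[Theorem~4.5.1(b)]{BRS}, which is what underlies \cite[Lemma~4.6.3]{BRS}. Your plan lands in the right place, but the justification for the $c\mathcal{CA}^{\mathcal{E}}$ step should point to that input rather than declare it obvious.
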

\begin{proof} The assertion for  $c \mathcal{CA}^{\mathcal{E}}$ is in \cite[Lemma 4.6.3.]{BRS}.
      For the proof in  $c \mathcal{S}^{\mathcal{G}}$, note that a map $f$ in  $c \mathcal{S}^{\mathcal{G}}$ 
is $0$-connected if and only if $H_* (f)$ is $0$-connected in  $c \mathcal{CA}^{\mathcal{E}}$ and that homotopy pullbacks are preserved under $H_*$.$\square$.\end{proof}
        
\begin{lemma}\label{0push}Let
\[ 
\xymatrix{
 B \ar[r]^{i}\ar[d]^{j} & C \ar[d]^{\bar{j}} \\
        A \ar[r]^{\bar{i}}    & P}\] be a homotopy pushout diagram in $c\mathcal{M}^{\mathcal{F}}$
        % $c \mathcal{CA}^{\mathcal{E}}$ or  $c \mathcal{S}^{\mathcal{G}}$.
        If $i$ is $0$-connected so is $\bar{i}$.\end{lemma}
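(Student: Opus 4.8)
The goal is to show that in a homotopy pushout square, $0$-connectedness of $i$ is inherited by the cobase-changed map $\bar i$. My plan is to reduce everything to the coalgebra category $c\mathcal{CA}^{\mathcal{E}}$ using \ref{Hsquare}, exactly as in the proof of \ref{0pull}, and then exploit the long exact sequence in the cohomotopy groups $\pi_*[-,F]$.

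\textbf{Reduction to $c\mathcal{CA}^{\mathcal{E}}$.} First I would handle the case $c\mathcal{S}^{\mathcal{G}}$ by the same device used in \ref{0pull}: a map $f$ in $c\mathcal{S}^{\mathcal{G}}$ is $0$-connected if and only if $H_*(f)$ is $0$-connected in $c\mathcal{CA}^{\mathcal{E}}$. By \ref{Hsquare} the homology functor sends the given homotopy pushout to a homotopy pushout \emph{provided} one of the two maps being glued is $0$-connected; since $i$ is assumed $0$-connected this hypothesis is met, so $H_*$ applied to the square is again a homotopy pushout in $c\mathcal{CA}^{\mathcal{E}}$ with $H_*(i)$ still $0$-connected. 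Thus it suffices to prove the statement in $c\mathcal{CA}^{\mathcal{E}}$.

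\textbf{The coalgebra case.} In $c\mathcal{CA}^{\mathcal{E}}$ I would argue using the defining property of $0$-connectedness from \ref{def. n-cocon.}: for each $F\in\mathcal{E}$, the map $i$ is $0$-connected means $\pi_s[C,F]\to\pi_s[B,F]$ is an isomorphism for $0<s<0$ (vacuous) and an epimorphism at $s=0$. So for $0$-connectedness the only content is surjectivity of $\pi_0$. The strategy is to replace the homotopy pushout by a strict pushout along a cofibration (using that one may assume, after factoring, that $i$ is a cofibration), and then to compare the square mapped into a fibrant $\mathcal{F}$-injective $F$. Mapping the pushout square into $F$ yields a pullback square of mapping spaces, and I would extract from it a Mayer--Vietoris type long exact sequence in the homotopy groups $\pi_*[-,F]$. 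The surjectivity of $\pi_0[C,F]\to\pi_0[B,F]$ (the hypothesis on $i$) then forces the corresponding surjectivity of $\pi_0[P,F]\to\pi_0[A,F]$ (the conclusion on $\bar i$), because the relevant connecting maps fit into the pullback diagram and the cobase-change preserves the fiber.

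\textbf{Main obstacle.} The delicate point will be justifying that the pushout-to-pullback duality on $[-,F]$ genuinely produces the long exact (Mayer--Vietoris) sequence in this non-cofibrantly-generated resolution model category, and that the connecting homomorphisms are natural enough for the diagram chase to go through at the level $s=0$. Concretely, I expect the work to be in reducing to a strict pushout along a cofibration and controlling how $[-,F]$ converts it into a homotopy pullback of simplicial groups, so that the induced six-term sequence gives the surjectivity transfer from $i$ to $\bar i$. Once that exactness is in hand, the conclusion for $c\mathcal{CA}^{\mathcal{E}}$ is a one-line diagram chase, and the $c\mathcal{S}^{\mathcal{G}}$ case follows by the homology reduction above.
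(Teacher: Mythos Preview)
Your reduction to $c\mathcal{CA}^{\mathcal{E}}$ via \ref{Hsquare} is exactly the paper's argument for the $c\mathcal{S}^{\mathcal{G}}$ case. For the coalgebra case, however, the paper takes a different and more concrete route than your Mayer--Vietoris proposal.

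The paper's argument unpacks everything into the underlying cosimplicial vector spaces: after replacing $i$ by a cofibration, one uses that for $F = H_*(K(\mathbb{F},m))$ the simplicial group $[X,F]$ identifies with the linear dual $(X_m)^*$. The hypothesis that $i^*:[C,F]\to[B,F]$ is a $\pi_0$-surjective Kan fibration forces it to be surjective on $0$-simplices, hence $i$ is degreewise injective. Since colimits in $\mathcal{CA}$ are computed in vector spaces, the cobase change $\bar i$ is again degreewise injective, so $H^0(\bar i)=\pi^0(\bar i)$ is injective, which dualizes back to $0$-connectedness.

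Your approach via mapping spaces is correct in spirit and in fact more portable, but the Mayer--Vietoris framing is unnecessary overhead and your ``main obstacle'' is not where the work lies. The pushout-to-pullback conversion is immediate: $[-,F]$ is a representable contravariant functor on $\mathcal{CA}$ (applied degreewise), so a strict pushout along the cofibration $i$ goes to a strict pullback $[P,F] = [A,F]\times_{[B,F]}[C,F]$ of simplicial abelian groups. No six-term sequence or connecting homomorphism is needed: the map $[P,F]\to[A,F]$ is the base change of the fibration $[C,F]\to[B,F]$, and a $\pi_0$-surjective Kan fibration of simplicial groups is surjective on $0$-simplices (this is the one nontrivial step, and the paper makes it explicit), so its pullback is surjective as well. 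Your phrase ``cobase-change preserves the fiber'' is exactly this. What your approach buys is independence from the special linear-algebra features of $\mathcal{CA}$; what the paper's approach buys is that it is completely elementary once one knows colimits in $\mathcal{CA}$ are computed in vector spaces.
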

        
\begin{proof} We start with the case $c \mathcal{CA}^{\mathcal{E}}$ and may assume that $i$ is a cofibration. So the map
\[i^*:[C,H_* (K(\mathbb{F},m))]\to [B,H_* (K(\mathbb{F},m))]\]
is a fibration of simplicial groups for each $m$. By the cofreenes of $H_* (K(\mathbb{F},m))$, this sequence can be identified with the induced map on the dual in degree $m$
\[i^* :(C^m)^* \to (B^m)^* .\]Because $i^*$ is $0$-connected 
it is a $\pi_0$-surjective Kan fibration and hence, surjective.
It follows that $i$ is injective. Then clearly $\bar{i}$ is injective as well since colimits are formed in the underlying cosimplicial vector spaces or equivalently in the underlying cochain complex. But then the restriction to the cocycles \[\bar{i}:Z^0 (C)=H^0 (C)\to Z^0 (P)=H^0 (P)\] is injective. \\
Suppose that the cube is in  $c \mathcal{S}^{\mathcal{G}}$. Apply $H_*$ to get a cube in $c \mathcal{CA}^{\mathcal{E}}$ which is a homotopy pushout by \ref{Hsquare}. Since $H_* (i)$ is  $0$-connected $H_* (\bar{i})$ is $0$-connected and hence so is $\bar{i}$.$\square$\\\end{proof}

 % Since homotopy pullbacks are preserved by $H_*$ and  note that a map $f$ in  $c \mathcal{S}^{\mathcal{G}}$. 
%is $0$-connected if and only if $H_* (f)$ is $0$-connected in  $c \mathcal{CA}^{\mathcal{E}}$. Since homotopy pullbacks are preseved under $H_*$

%It will be convenient to have the following ad hoc 

%\begin{definition} Let $\mathcal{M}$ be a model category and $\mathcal{D}$ a class of maps in $\mathcal{M}$. We say that the restricted cube axiom holds in $\mathcal{M}$ with respect to $\mathcal{D}$ if the assertion of \ref{cube space}holds 
%\end{definition}
\section{Applications of the cube theorem}
Let $\mathcal{N}$ be a model category and $Z$ an object of  $\mathcal{N}$. Denote the categories of objects over and under $Z$ in   
$\mathcal{N}$ by $(\mathcal{N}\downarrow Z)$ and  $(\mathcal{N}\uparrow Z)$ respectively. If the category $\mathcal{M}$ has an initial objects $\emptyset$ then $\emptyset \to Z$ is initial in  $(\mathcal{N}\downarrow X)$ and  $1_Z$ is terminal. If  $\mathcal{N}$ has a terminal object $1$ then $Z\to 1 $ is terminal in  $(\mathcal{N}\uparrow Z)$ and $1_Z$ is initial.\\ 
There are model structures on these categories in which a morphism is defined to be a weak equivalence, cofibration or fibration  if its image under the forget functor to $\mathcal{N}$ is in the corresponding class. Moreover, the property of being left proper, right proper or proper is inherited from the model category $\mathcal{N}$
\cite[Theorem 1.7.(3), Theorem 2.8.(3)]{Hi2}. Pushouts and pullbacks in $(\mathcal{N}\downarrow Z)$ and  $(\mathcal{N}\uparrow Z)$  are formed in  $\mathcal{M}$ \cite[Lemma1.3., Lemma 2.3.]{Hi2}.

\begin{definition} Let $\mathcal{N}$ be a category and $X\in \mathcal{N}$. The category objects  coaugmented under $Z$ is defined as

\[\mathcal{N}_Z:=((\mathcal{N}\downarrow X)\uparrow 1_Z)\]
\end{definition}
Objects of $\mathcal{N}_Z$ are pairs of maps $Z\to Y\to Z$ which compose to $1_Z$.
In case $\mathcal{M}$ is a (proper) model category so is  $\mathcal{M}_Z$. The object $(1_Z,1_Z)$ is initial and terminal in 
$\mathcal{N}_Z$. For short we denote it by $X$.

\begin{lemma}\label{resund} The restricted  cube axiom  holds in $(c\mathcal{M}^{\mathcal{F}}\downarrow Z)$, $(c\mathcal{M}^{\mathcal{G}}\uparrow Z)$ and in $c\mathcal{M}_Z^{\mathcal{F}}$ for each $Z\in c\mathcal{M}^{\mathcal{F}}$.
\end{lemma}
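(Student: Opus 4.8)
The plan is to reduce the statement for the various over/under categories to the already-established restricted cube axiom in $c\mathcal{M}^{\mathcal{F}}$ itself (\ref{CubeC}). The key observation, recorded in the paragraph preceding the lemma, is that in each of the categories $(c\mathcal{M}^{\mathcal{F}}\downarrow Z)$, $(c\mathcal{M}^{\mathcal{F}}\uparrow Z)$ and $c\mathcal{M}_Z^{\mathcal{F}}$ the weak equivalences, cofibrations and fibrations are created by the forgetful functor to $c\mathcal{M}^{\mathcal{F}}$, and pushouts and pullbacks are computed in $c\mathcal{M}^{\mathcal{F}}$ \cite[Lemma 1.3., Lemma 2.3.]{Hi2}. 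Consequently homotopy pushouts and homotopy pullbacks are detected and computed by the forgetful functor as well. The connectivity of a map is defined in terms of $\pi_*[-,F]$, which also only sees the underlying object in $c\mathcal{M}^{\mathcal{F}}$, so ``$B\to A$ is $0$-connected'' is likewise an underlying-category condition.

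First I would treat $(c\mathcal{M}^{\mathcal{F}}\downarrow Z)$. Given a commutative cube in this category whose bottom square is a homotopy pushout, whose vertical faces are homotopy pullbacks, and whose map $B\to A$ is $0$-connected, I apply the forgetful functor $U:(c\mathcal{M}^{\mathcal{F}}\downarrow Z)\to c\mathcal{M}^{\mathcal{F}}$. Since $U$ preserves and detects weak equivalences, cofibrations, fibrations, pushouts and pullbacks, the image cube satisfies exactly the hypotheses of \ref{CubeC}: bottom homotopy pushout, vertical homotopy pullbacks, and $U(B)\to U(A)$ still $0$-connected. By \ref{CubeC} the top square of $U\mathcal{C}$ is a homotopy pushout in $c\mathcal{M}^{\mathcal{F}}$. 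Because $U$ detects homotopy pushouts, the top square of the original cube is a homotopy pushout in $(c\mathcal{M}^{\mathcal{F}}\downarrow Z)$, which is what we want. The case $(c\mathcal{M}^{\mathcal{F}}\uparrow Z)$ is entirely analogous, using the forgetful functor to $c\mathcal{M}^{\mathcal{F}}$ for the under-category, and $c\mathcal{M}_Z^{\mathcal{F}}=((c\mathcal{M}^{\mathcal{F}}\downarrow Z)\uparrow 1_Z)$ then follows by composing the two reductions, or directly by the same argument applied to the composite forgetful functor $c\mathcal{M}_Z^{\mathcal{F}}\to c\mathcal{M}^{\mathcal{F}}$.

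I do not expect a serious obstacle here; the content is bookkeeping about how the model structure and the relevant homotopy-(co)limit and connectivity notions are created by the forgetful functor. The one point that deserves a careful sentence is that a homotopy pushout (resp.\ pullback) in an over- or under-category agrees with the underlying homotopy pushout (resp.\ pullback): this is exactly where one invokes that $(c\mathcal{M}^{\mathcal{F}}\downarrow Z)$ and $(c\mathcal{M}^{\mathcal{F}}\uparrow Z)$ are proper with (co)fibrations and weak equivalences created in $c\mathcal{M}^{\mathcal{F}}$, so that a Reedy-style (co)fibrant replacement used to compute the homotopy (co)limit downstairs lifts to the slice category and its underlying object computes the homotopy (co)limit upstairs. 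Once this compatibility is recorded, the three assertions of the lemma are immediate consequences of \ref{CubeC}.
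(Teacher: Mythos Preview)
Your proposal is correct and takes essentially the same approach as the paper. The paper's own proof is a single sentence: ``This follows from \ref{CubeC} and the fact all the relevant structure is defined by the forget functor to $c\mathcal{M}^{\mathcal{F}}$,'' which is precisely the reduction you spell out in more detail.
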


\begin{proof} This follows from \ref{CubeC} and the fact all the relevant structure is defined by the forget functor to  $c\mathcal{M}^{\mathcal{F}}$.$\square$
\end{proof}
\medskip
Recall that one says that homotopy pushouts  are universal in an $\infty$-category  if homotopy pushouts  are stable under homotopy  pullbacks.\\

\begin{lemma}\label{unipush} Homotopy pushouts along pairs of maps one of which is $0$-connected are universal
 in $c\mathcal{M}^{\mathcal{F}}$.
\end{lemma}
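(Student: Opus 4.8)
The plan is to deduce universality as a formal consequence of the restricted cube axiom (Theorem \ref{CubeC}). Unwinding the definition recalled just before the statement, I must show the following: given a homotopy pushout square with span $A \leftarrow B \to C$ and pushout $D = A \sqcup^h_B C$ in which one of $B\to A$, $B\to C$ is $0$-connected, and given an \emph{arbitrary} map $E_D\to D$, the square obtained by forming the homotopy pullback of each corner along $E_D\to D$ is again a homotopy pushout. Since the two legs of the span play symmetric roles, I may relabel so that it is the map $B\to A$ that is $0$-connected, exactly matching the hypothesis of Theorem \ref{CubeC}.

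First I would assemble the cube of Definition \ref{cube def} whose bottom face is the given homotopy pushout, whose vertical ``pullback-along'' map is $E_D\to D$, and whose remaining top corners are defined by $E_A := A\times^h_D E_D$, $E_B := B\times^h_D E_D$ and $E_C := C\times^h_D E_D$, with the vertical maps the canonical projections. By construction the left face (on $E_A,E_D,A,D$) and the front face (on $E_C,E_D,C,D$) are homotopy pullbacks. Next I would verify that the two remaining vertical faces are homotopy pullbacks as well, which is precisely the pasting law for homotopy pullbacks; this is available because $c\mathcal{M}^{\mathcal{F}}$ is proper. Concretely, in the rectangle formed by the composite $B\to A\to D$ the right-hand square ($E_A,E_D,A,D$) is a homotopy pullback and the outer rectangle is a homotopy pullback by the very definition $E_B = B\times^h_D E_D$, whence the left-hand square ($E_B,E_A,B,A$), which is the back face of the cube, is a homotopy pullback. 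The right face (on $E_B,E_C,B,C$) is handled identically using the composite $B\to C\to D$.

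With the bottom face a homotopy pushout along the $0$-connected map $B\to A$ and all four vertical faces homotopy pullbacks, the restricted cube axiom of Theorem \ref{CubeC} applies and shows that the top face is a homotopy pushout. Since the top face is by construction the homotopy pullback of the original square along $E_D\to D$, this is exactly the stability under homotopy pullback that universality demands. For consistency I would also remark that Lemma \ref{0pull} shows the pulled-back span $E_A\leftarrow E_B\to E_C$ again contains a $0$-connected map (namely $E_B\to E_A$), so that the class of squares under consideration is closed under this operation.

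I expect the main obstacle to be bookkeeping rather than anything conceptual: the three top corners must be set up so that all four side faces are genuinely homotopy pullbacks, which forces using the pasting law in both its ``definitional'' and its ``deduced'' directions, and one must keep track of the fact that the $0$-connectivity hypothesis can always be arranged on the map $B\to A$ required by Theorem \ref{CubeC}. Once the cube axiom is granted, no further analysis of the resolution model structures on $c\mathcal{CA}^{\mathcal{E}}$ or $c\mathcal{S}^{\mathcal{G}}$ is needed; the statement is purely a repackaging of that axiom.
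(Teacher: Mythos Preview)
Your proposal is correct and follows essentially the same approach as the paper: the paper's proof simply cites \cite[Lemma 2.5]{D.H} as applying verbatim, and what you have written is precisely that standard argument---construct the cube by iterated homotopy pullback along $E_D\to D$, verify all four vertical faces are homotopy pullbacks via the pasting law, and invoke the (restricted) cube axiom of Theorem~\ref{CubeC}. You have simply unpacked the citation.
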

\begin{proof}The proof of \cite[Lemma 2.5.]{D.H} applies word for word.$\square$\\
\end{proof} 

The following result which is key for us can be found except for the items (4) and (6) in \cite{D.H}.

 \begin{theorem}\label{aC} Let $\mathcal{M}$ be an $\infty$-category with finite limits, pushouts  in which the cube axiom holds.\\
  Let $X$ and $Y$ be pointed objects in $\mathcal{M}_*$ and $F\to E\to X$ a fibration sequence. 
 Then all the assertions made in \ref{main application} hold true in  $\mathcal{M}$ where for (2) and (3) we assume in addition that countable coproducts exist in $\mathcal{M}$.$\square$\end{theorem}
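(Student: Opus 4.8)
The plan is to treat items (1), (2), (3), (5) and (7) as already established: under the hypotheses of \ref{aC} (the cube axiom, finite limits and pushouts, and countable coproducts for the infinite decompositions in (2) and (3)) these are precisely the splitting and fibration results of Devalapurkar--Haine \cite{D.H}, and one only has to check that their proofs use nothing beyond these hypotheses. So the genuine work is to supply the two remaining fibration sequences (4) and (6). The tool for both is the universality of homotopy pushouts that the cube axiom provides (the $\infty$-categorical form of \ref{unipush}, i.e.\ \cite[Lemma 2.5]{D.H}): pulling a homotopy pushout square back along an arbitrary map again yields a homotopy pushout.

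For (6) I would exhibit the collapse map $X\vee Y\to Y$ as a pushout in the slice $\mathcal{M}_{/Y}$. Writing $X\vee Y$ as the homotopy pushout of $X\leftarrow *\to Y$, the three objects carry compatible maps to $Y$ — the constant map $X\to Y$, the basepoint $*\to Y$, and the identity $Y\xrightarrow{\mathrm{id}}Y$ — and the induced map on the pushout is exactly the collapse. Pullback along the basepoint $*\to Y$ preserves this pushout, so the homotopy fiber of $X\vee Y\to Y$ is the homotopy pushout of the span of fibers $\mathrm{Fib}(X\to Y)\leftarrow\mathrm{Fib}(*\to Y)\to\mathrm{Fib}(Y\xrightarrow{\mathrm{id}}Y)$, which is $(X\times\Omega Y)\leftarrow\Omega Y\to *$, the left map being the inclusion at the basepoint of $X$ and the right map the collapse. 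That pushout is $(X\times\Omega Y)/(\{*\}\times\Omega Y)=(\Omega Y\times X)/\Omega Y$, which is (6).

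For (4) the argument is identical with the fold map replacing the collapse: I would realize $\nabla\colon X\vee X\to X$ as the pushout in $\mathcal{M}_{/X}$ of the span $(X\xrightarrow{\mathrm{id}}X)\leftarrow(*\to X)\to(X\xrightarrow{\mathrm{id}}X)$, then pull back along the basepoint $*\to X$. The two outer fibers are $\mathrm{Fib}(\mathrm{id}_X)\simeq *$ and the middle fiber is $\mathrm{Fib}(*\to X)\simeq\Omega X$, so the homotopy fiber of $\nabla$ is the homotopy pushout of $*\leftarrow\Omega X\to *$, namely $\Sigma\Omega X$; this is (4).

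I expect the only real points to watch to be bookkeeping rather than conceptual: correctly identifying the three pulled-back fibers and the maps among them (in particular that the middle-to-outer maps become basepoint inclusions, so the resulting pushouts are the half-smash $(\Omega Y\times X)/\Omega Y$ and the suspension $\Sigma\Omega X$), and confirming that \cite{D.H} prove (1), (2), (3), (5) and (7) using only the cube axiom together with the stated (co)completeness, so that they transfer verbatim to the abstract $\mathcal{M}$ of \ref{aC}. The hardest step is simply to apply universality in the appropriate slice category with the right cospan, after which both fiber sequences drop out.
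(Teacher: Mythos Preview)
Your proposal is correct and essentially identical to the paper's proof. The paper likewise defers (1), (2), (3), (5), (7) to the cited results in \cite{D.H}, and for (4) and (6) applies universality of pushouts (\cite[Lemma 2.5]{D.H}) to the wedge-defining pushout squares equipped with the fold map $X\vee X\to X$ and the collapse $Y\vee X\to Y$ respectively, pulling back along the basepoint to obtain the spans $\ast\leftarrow\Omega X\to\ast$ and $X\times\Omega Y\leftarrow\Omega Y\to\ast$ exactly as you describe.
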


 \begin{proof}
The assertions in (1), (2), (3), (5) and (7) are proved in 2.21.2, 2.10., 3.2., 2.24 and 3.5. of \cite{D.H} respectively.
% Point (7) is a special case of (6).
  For (4) note that by \cite[Lemma 2.5.]{D.H} pushouts are universal in $\mathcal{M}$.  Point (4) follows from an application of this universality to the diagram \[ 
\xymatrix{
 \ast \ar[r]\ar[d] & X\ar[d]  \\
        X \ar[r]   & X\vee X}\]
        and the two maps $ X\vee X\stackrel{(1,1)}{\to} X$ and $\ast \to X$. For (6) apply universality of pushouts to 
        \[ 
\xymatrix{
 \ast \ar[r]\ar[d] & X\ar[d]  \\
        Y \ar[r]   & Y\vee X}\]
        and the two maps $ Y\vee X\stackrel{(1,\ast )}{\to} Y$ and $\ast \to X$.
         $\square$
 \end{proof}
 
 \begin{remark}In the context of model categories, (3) and (5) were proved by Doerane in \cite{Do2} as we learned from \cite{R.S.} where  a version of (4) is given.
 \end{remark}
 
   Now we apply \ref{aC} to the model category $c\mathcal{M}_Z^{\mathcal{F}}$.
 Since it is  model category enriched over pointed simplicial sets
 $\mathcal{S}_*$ the
   categories of cofibrant and fibrant objects  define pointed $\infty$-categories in the model of simplicial categories.
    We denote the underlying pointed $\infty$-categories  by 
 $c M_Z^{\mathcal{F}}$.  They have all small limits and colimits. The restricted cube axiom holds in     
 $(c M^{\mathcal{F}})_Z$ by \ref{CubeC} and \ref{resund}.
\\

 \begin{proof} of (\ref{main application}).\\
One simply checks that under the connectivity assumptions  made the proofs in \cite{D.H} go through in the $\infty$-category  $c M_Z^{\mathcal{F}}$ since the restricted cube axiom is sufficient  in all applications of the cube axiom made. For (4) and (6) one uses \ref{unipush}.$\square$
\end{proof}

 \section{Connectivity estimates }
 
 In this section we provide the results on cosimplicial connectivity needed in the proof of the Hilton-Milnor theorem. The proofs flow from three sources. First the fact that colimits in $\mathcal{CA}_Z^{E}$ are formed in the underlying cosimplicial vector spaces. Second \ref{Hsquare} which enables us to reduce the proofs in  $\mathcal{S}_Z^{G}$ to the ones in $\mathcal{CA}_Z^{E}$. Third the dual Blakers-Massey theorem which replaces the generalized Blakers-Massey theorem of \cite{A.B.F.J.} valid in $\infty$-topoi.\\
 
Recall that finite products in $\mathcal{CA}_Z^{E}$ are defined by the cotensor product over $Z$. Infinte products are described as the filtered limit  of all finite subproducts. Now filtered limits are given as the colimit of all unstable subcoalgebras of finite dimension in the limit of the underlying vector spaces. So all in all  
 \[\square_I C_i = colim_{D_a } lim^{Vec}_J (\square )_J C_j\]
 where $J$ runs through the finite subsets of $I$ and the colimit over the finite dimensional subcoalgebras $D_a$ in the vector space limit.
 This may be seen as  in the absolute case to be found for example in \cite[1.1.b]{G.4}. Another way to the construction of infinite products is via the cofree functor right adjoint to the forget functor $I$:
   \[I:\mathcal{CA}_C \rightleftarrows (Vec\uparrow J(C)):G_{C}\] 
   as in \cite{A}.\\
% In the proof of \ref{traditional HM} we need to pin down how several natural constructions in $c\mathcal{M}_Z^{\mathcal{F}}$ behave with respect to cosimplicial connectivity. 
 
 \begin{proposition}\label{inf prod} Let $I$ be a discrete category and $D_i \in c\mathcal{M}_Z^{\mathcal{F}}$ fibrant and $D_i \to Z$  s-connected for all i. 
 % Suppose that $Z$ is fibrant in $c\mathcal{M}^{\mathcal{F}}$.
  % with with $\pi^{*}(Z)\cong C$ or   $\pi^{*} H_{\star}(Z)\cong C$ respectively concentrated in $*$-degree $0$.
  Then $\square_I D_i \to Z$ is s-connected.
 \end{proposition}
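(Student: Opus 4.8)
The plan is to treat the coalgebra case $c\mathcal{CA}^{\mathcal{E}}_Z$ first and then deduce the space case $c\mathcal{S}^{\mathcal{G}}_Z$ from it. For the space case recall that a map $f$ in $c\mathcal{S}^{\mathcal{G}}$ is $s$-connected if and only if $H_*(f)$ is $s$-connected in $c\mathcal{CA}^{\mathcal{E}}$ (connectivity is detected by $\pi^*H_*$, exactly as in the proofs of \ref{0pull} and \ref{0push}), and that $H_*$ commutes with filtered colimits and carries finite products to finite products by the Künneth theorem. Combined with the presentation $\square_I D_i = \mathrm{colim}_{D_a}\lim^{Vec}_J \square_J D_i$ recalled above, this lets me transport the coalgebra computation to spaces, so it suffices to argue in $c\mathcal{CA}^{\mathcal{E}}_Z$.

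First I would reformulate $s$-connectivity in $c\mathcal{CA}^{\mathcal{E}}_Z$ in purely linear terms. By the cofreeness of the objects of $\mathcal{E}$, for $F=H_*(K(\mathbb{F},m))$ the simplicial vector space $[D,F]$ is the degreewise dual of the cosimplicial vector space $D_m$, so that $\pi_s[D,F]\cong(\pi^s D_m)^*$, as in the identification used in \ref{0push}. Writing $D\cong Z\oplus\bar D$ for the splitting furnished by the coaugmentation, with $\bar D=\ker(D\to Z)$, the structure map $D\to Z$ is then $s$-connected precisely when $\pi^t(\bar D_m)=0$ for $0<t<s$ and all $m$ (the epimorphism condition at $t=s$ in \ref{def. n-cocon.} being automatic for a split projection).

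Next the finite case: by induction on a finite set $J$ I would show $\square_J D_i\to Z$ is $s$-connected. Since the cotensor product over $Z$ is exact on underlying cosimplicial vector spaces, cohomotopy obeys a Künneth formula, and for $0<t<s$ every summand of $\pi^t(\square_J D_i)_m$ contains at least one tensor factor $\pi^{t_k}(D_{i_k})$ with $0<t_k<s$, which vanishes on the reduced part; comparing with $\pi^t(Z)_m$ gives $s$-connectivity. Here it is essential to carry the computation all the way down to $Z$ rather than through an intermediate factor, since that is what cancels the potentially dangerous degree-zero contributions $\pi^0(\bar D_i)$. For the infinite case I would use that cohomotopy commutes with the filtered colimit over the finite-dimensional subcoalgebras $D_a$, reducing the problem to controlling $\pi^t$ of the vector-space limit $\lim^{Vec}_J\square_J D_i$. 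Because each coaugmentation $D_i\to Z$ is a split epimorphism, the transition maps $\square_{J'}D_i\to\square_J D_i$ for $J\subseteq J'$ are surjective, so the associated tower of normalized cochain complexes has surjective transitions, its $\lim^1$ vanishes, and $\pi^t(\lim^{Vec}_J\square_J D_i)\cong\lim_J\pi^t(\square_J D_i)$. As each finite stage is $s$-connected over $Z$, so is the limit, and hence so is $\square_I D_i\to Z$.

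The main obstacle is exactly this passage from finite to infinite products. The argument hinges on the vanishing of the relevant $\lim^1$ and on the commutation of cohomotopy with the two-step colimit-of-limits in the presentation $\mathrm{colim}_{D_a}\lim^{Vec}_J$; the split, hence Mittag--Leffler, structure of the tower of finite subproducts is the crucial input that makes the finite-range computation below degree $s$ go through. For the space case one must in addition confirm that $H_*$ interacts correctly with this same presentation, which is where the compatibility of $H_*$ with filtered colimits together with \ref{Hsquare} is used.
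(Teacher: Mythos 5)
Your treatment of the coalgebra case follows the paper's route quite closely: finite cotensor products handled degreewise (the paper uses the Cotor spectral sequence of \cite[Theorem 4.5.1 (b)]{BRS}, you use a K\"unneth argument, which is legitimate only because fibrancy gives the splitting $D_i^n\cong Z^n\otimes E_i^n$ --- you should say so explicitly, since the cotensor product over $Z$ is \emph{not} exact for arbitrary comodules), then the Milnor $\lim^1$ sequence for the tower of finite subproducts, then commutation of $\pi^*$ with the filtered colimit over finite-dimensional subcoalgebras. One imprecision: surjectivity of the transition maps of the tower of cochain complexes gives you the \emph{existence} of the short exact sequence $\lim^1\pi^{t-1}\to\pi^t(\lim)\to\lim\pi^t$, not the vanishing of $\lim^1\pi^{t-1}$; that vanishing requires the tower of cohomotopy groups themselves to be Mittag--Leffler. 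The paper gets this by first invoking \cite[Proposition 3.4.3]{BRS} to normalize $E_i^n=\mathbb{F}$ for $n\le s$, so that the system is constant in the relevant range; your finite-stage computation gives the same conclusion ($\pi^{t-1}$ constant $\cong\pi^{t-1}(Z)$ for $t\le s$), but you need to say that, not merely cite surjectivity of the complexes.

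The genuine gap is in the space case. The presentation $\square_I D_i=\mathrm{colim}_{D_a}\lim^{Vec}_J\square_J D_j$ is a description of the product in $\mathcal{CA}$; the infinite product in $c\mathcal{S}$ is an honest inverse limit of simplicial sets, and $H_*$ does not commute with inverse limits, nor is "compatibility with filtered colimits'' or \ref{Hsquare} (which concerns pullbacks and pushouts) of any help here. What you actually need is the isomorphism $H_*(\prod_I D_i)\cong\square_I H_*(D_i)$, and this is precisely the nontrivial input the paper supplies: by fibrancy each level is a product $(\times_i G_i^n)\times Z^n$ of Eilenberg--MacLane spaces of type $\mathbb{F}$, and mod-$p$ homology commutes with infinite products of such gems by \cite[4.4]{Bou3} (Milnor--Moore in characteristic zero). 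Without citing or reproving that fact, your reduction of the space case to the coalgebra case does not go through.
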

 \begin{proof} First note that finite products $c\mathcal{CA}_C$ are defined by the degreewise cotensor product over $C$ and this functor has the asserted property by the spectral sequence \cite[Theorem 4.5.1 (b)]{BRS}. Since $D_i$ is fibrant there is an isomorphism
 \[D_i^n \cong Z^n \otimes E_i^n \] with $E_i \in \mathcal{E}$
 and consequently an isomorphism \[\square_{Z^n} (D^n_{i} ,D^n_{j})\cong E^n_i \otimes E_j^n \otimes Z^n .\] So the limit under consideration is just an ordinary infinite tensor product. The structure maps of the system in degree $n$ are given by the projections
 \[E^n_{i_1}\otimes \ldots\otimes E_{i_k} ^n \otimes Z^n \to E^n_{i_1}\otimes \ldots \otimes \hat{E^n_{i_s}}\otimes \ldots \otimes E_{i_k} ^n \otimes Z^n \] which are clearly surjective.
 By \cite[Proposition 3.4.3]{BRS} we may assume that $E_i^n=\mathbb{F}$ for $n\leq s$. Consequently $D_i^n =Z^n$ for $n\leq s$.
 By \cite[p.84]{We} there is a short exact sequence for each r 
 \[Lim^{Vec,1}\pi^{r-1}(\otimes_{J}E_j^*\otimes Z^* )\to\pi^r (Lim^{Vec}(\otimes_{J}E_j^*\otimes Z^* )\to Lim^{Vec}\pi^r (\otimes_{J}E_j^*\otimes Z^* ).\] For $r-1\leq s$ the derived functor is trivial since  in this case the system is constant.
 So there are isomorphisms for $r\leq s$
 \[\pi^r (Lim^{cVec}(\otimes_{J}E_j\otimes Z ))\cong Lim^{Vec}\pi^r (\otimes_{J}E_j\otimes Z )\cong \pi^r (Z)\]
 The actual limits in coalgebras are then defined by the colimits over the filtered system  of finite dimensional subcoalgebras. 
 Colimits are computed in the underlying cosimplicial vector spaces or equivalently cochain complexes.  Hence, this colimit commutes with $\pi^*(-)$. This implies the assertion in $c\mathcal{CA}^{\mathcal{E}_C}$  because every unstable coalgebra is the colimit of its finite subcoalgebras .\\
  The claim in $c\mathcal{S}^{\mathcal{G}}$ may be reduced to the case just proved as follows. By the same argument as above the  products which show up are the ordinary  products of the form $ (\times_{i\in I} G_i^n ) \times Z^n$ with $G_i \in \mathcal{G}$. We have to estimate the connectivity of the functor $\pi^* H_{\ast}$ applied to these products.
  Now recall the  fact that homology commutes with infinite products of Eilenberg-MacLane spaces of type $\mathbb{F}$  and combine this with the assertion in  $c\mathcal{CA}^{\mathcal{E}}$ just seen. For finite characteristic this fact is a consequence of \cite[4.4.]{Bou3} and in characteristic zero it may be deduced from the Milnor-Moore theorem as in the proof of \cite[Theorem 4.2.2.]{BRS}. This completes  the proof.$\square$ \\
 \end{proof}
 
 \begin{lemma}\label{convee} Let
% $Z\in c\mathcal{M}^{\mathcal{F}}$
% with $\pi^{*}(Z)\cong C$ or respectively  $\pi^{*} H_{\star}(Z)\cong C$ concentrated in $*$-degree $0$.
  $X_n\in c\mathcal{M}_Z^{\mathcal{F}}$ with \[q_n :X_n\to Z\]  $k_n$-connected 
 %as pointed objects
  such that $k_n\geq 0$ for all $n\in \mathbb{N}$. Then $\vee X_n \to Z$ is at least $m$-connected with $m=min(k_n |n\in \mathbb{N})$.
\end{lemma}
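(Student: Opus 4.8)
The statement to prove is Lemma~\ref{convee}: for a countable family $X_n \in c\mathcal{M}_Z^{\mathcal{F}}$ with each structure map $q_n : X_n \to Z$ being $k_n$-connected ($k_n \geq 0$), the wedge $\vee X_n \to Z$ is $m$-connected with $m = \min(k_n)$. My strategy is to reduce the problem to an assertion about cosimplicial vector spaces in the coalgebra case, then use Lemma~\ref{Hsquare} to transport the result from $c\mathcal{CA}^{\mathcal{E}}$ to $c\mathcal{S}^{\mathcal{G}}$, exactly mirroring the two-case structure used throughout this section and in the proof of Proposition~\ref{inf prod}.

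First I would treat $c\mathcal{CA}_Z^{\mathcal{E}}$. Since these are pointed model categories with zero object $Z$, the wedge is the coproduct in the pointed category, and colimits in $\mathcal{CA}_Z^{\mathcal{E}}$ are computed in the underlying cosimplicial vector spaces (the fact emphasized at the start of this section). For a pointed coproduct of coaugmented objects, in each cosimplicial degree $n$ one has a direct-sum decomposition $(\vee X_n)^m \cong Z^m \oplus \bigoplus_n \overline{X_n}^m$, where $\overline{X_n}^m$ is the cokernel of $Z^m \to X_n^m$ (the reduced part). The connectivity hypothesis on $q_n$ translates, via Definition~\ref{def. n-cocon.}, into the vanishing of the relevant reduced cohomotopy $\pi^r(\overline{X_n}) = 0$ for $r \leq k_n$. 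Since $\pi^*$ is exact on short exact sequences of cochain complexes and commutes with arbitrary direct sums (a countable coproduct of cochain complexes has homology the direct sum of the homologies), I obtain
\[
\pi^r(\vee X_n) \cong \pi^r(Z) \oplus \bigoplus_n \pi^r(\overline{X_n})
\]
for every $r$, and the second summand vanishes for $r \leq m = \min(k_n)$. Hence $\vee X_n \to Z$ induces an isomorphism on $\pi^r[-,F]$ for $r < m$ and an epimorphism at $r = m$, which is exactly $m$-connectedness.

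To pass to $c\mathcal{S}^{\mathcal{G}}$, I would invoke the principle used repeatedly above: a map $f$ in $c\mathcal{S}^{\mathcal{G}}$ is $s$-connected if and only if $H_*(f)$ is $s$-connected in $c\mathcal{CA}^{\mathcal{E}}$ (Lemma~\ref{0pull}, Lemma~\ref{0push} use this). The wedge is a homotopy pushout along the zero maps $Z \leftarrow X_n \to Z$ (realized iteratively), and since each $X_n \to Z$ is $0$-connected by hypothesis ($k_n \geq 0$), Lemma~\ref{Hsquare} guarantees that $H_*$ carries these pushouts to pushouts, so $H_*(\vee X_n) \simeq \vee H_*(X_n)$ in $c\mathcal{CA}^{\mathcal{E}}$. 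Applying the coalgebra case to the family $H_*(X_n)$ then gives the claim.

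\textbf{Main obstacle.} The delicate point is the passage from a finite to an infinite (countable) wedge, since an infinite iterated homotopy pushout is a sequential colimit and I must ensure both that $H_*$ commutes with it under the connectivity hypothesis and that $\pi^*$ of the infinite direct sum of cochain complexes computes as the direct sum of the $\pi^*$'s. The latter is harmless because filtered colimits of cochain complexes are exact and commute with taking cohomology; the former requires that the $0$-connectivity hypothesis $k_n \geq 0$ be in force at each stage so that Lemma~\ref{Hsquare} applies to each pushout in the sequence. I expect this to be the step needing the most care, and it is precisely why the hypothesis $k_n \geq 0$ (rather than merely $k_n \geq -1$) is imposed.
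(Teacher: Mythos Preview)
Your proposal is correct and follows essentially the same route as the paper: split into the $c\mathcal{CA}^{\mathcal{E}}$ and $c\mathcal{S}^{\mathcal{G}}$ cases, handle the coalgebra case by computing $\pi^*$ of the wedge in the underlying cosimplicial vector spaces (the paper does this via a short exact sequence for two summands and induction, you via the retraction splitting $X_n \cong Z \oplus \overline{X_n}$ directly), pass to the countable wedge by exactness of filtered colimits, and then reduce the space case to the coalgebra case via $H_*$ and Lemma~\ref{Hsquare}. The only cosmetic difference is that the paper phrases the finite step as an exact-sequence argument and cites \cite[Proposition 6.1.7]{BRS} for the finite wedges in $c\mathcal{S}^{\mathcal{G}}$, whereas you invoke the splitting and Lemma~\ref{Hsquare} directly; the content is the same.
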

\begin{proof} We may assume that the maps $i_n :Z\to X_n$  are cofibrations.  Consider the case of
$c\mathcal{CA}^{\mathcal{E}}_Z$ first.  The maps $i_n :Z\to X_n$ are injective as they are $0$-connected cofibrations.
%  Now colimits in $c\mathcal{CA}^{\mathcal{E}}$ are formed in underlying cochain complexes.
Consider the exact sequence of cochain complexes for $X_1 , X_2$.
\begin{equation}\label{eqex}0\to Z\stackrel{(i_1 , -i_2 )}{\longrightarrow}  X_1 \oplus X_2\to X_1 \oplus_Z X_2 \to 0 \end{equation}
Because \[H^{*}(i_j ):H^{*}(Z)\to H^{*}(X_j)\] is injective for $j\leq 2$ the long exact cohomology sequence of \ref{eqex} of decomposes in short exact sequences.
Since  $H^{ k_j\leq}(i_j) $ is an isomorphism it follows that there is an isomorphism 
\[(i_1 \oplus i_2 )^{*}:H^{\leq m}(Z)\cong H^{\leq m}(X_1 \oplus_Z X_2) .\]  By an easy  induction the assertion holds for any finite subset of $\mathbb{N}$.\\
Now there is an isomorphism \[\oplus X_n \cong colim_{|J|<\infty}(\oplus)_J {X_j}\] and since filtered colimits are exact the assertion holds in $c\mathcal{CA}^{\mathcal{E}}_Z$.\\
We turn to the case $c\mathcal{S}^{\mathcal{G}}_Z$.
 By \cite[Proposition 6.1.7]{BRS} the statement for finite subsets of the natural numbers hols there as well. The infinite coproduct 
 $c\mathcal{S}^{\mathcal{G}}_Z$ is again the colimit of its finite subcoproducts and since singular homology commutes with filtered colimits of cofibrations the assertion holds in $c\mathcal{S}^{\mathcal{G}}_Z$.$\square$\\
\end{proof}

We turn to smash powers now. The next result is a consequence of the homotopy excision theorem in $c\mathcal{M}^{\mathcal{F}}$\cite[Theorems 4.6.5, 6.2.6.]{BRS}.
 
 \begin{lemma}\label{conest2} Let
  %$Z\in c\mathcal{M}^{\mathcal{F}}$ be fibrant and
   $X,Y\in c\mathcal{M}_Z^{\mathcal{F}}$   with $X\to Z$ and $Y\to Z$  $k$ and $l$ connected   respectively.
 %as pointed objects
 Then $X\wedge Y$ is $k+l+1$-connected as a pointed object. \end{lemma}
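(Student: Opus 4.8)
The plan is to realize $X\wedge Y$ as a total homotopy cofiber and to read off its connectivity from the homotopy excision theorem of \cite{BRS}. Working in the pointed category $c\mathcal{M}_Z^{\mathcal{F}}$ with $0$-object $Z$, recall that $X\times Y$ is the derived relative product $X\times_Z Y$ and that $X\wedge Y$ is the total cofiber of the commutative square
\[
\begin{CD}
Z @>>> X \\
@VVV @VVV \\
Y @>>> X\times Y
\end{CD}
\]
whose maps out of $Z$ are the coaugmentations and whose maps into $X\times Y$ are the canonical sections; equivalently $X\wedge Y$ is the homotopy cofiber of $X\vee Y\to X\times Y$. (The identification can be checked on the functor $\pi^{*}H_{*}(-)$ via a Künneth computation, since in $c\mathcal{CA}^{\mathcal{E}}$ the relative product is a relative tensor product.) First I would record the connectivity of the edges: the projection $X\times Y\to Y$ is the homotopy pullback of $X\to Z$ along $Y\to Z$, so by the evident $s$-connected refinement of Lemma \ref{0pull} it is $k$-connected, and dually $X\times Y\to X$ is $l$-connected; the parallel edges $Z\to X$ and $Z\to Y$ then carry the corresponding connectivities.

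With these estimates the core of the argument is a single application of homotopy excision. Feeding one $k$-connected and one $l$-connected edge into the dual Blakers--Massey estimate of \cite{BRS} and iterating it over the two directions of the square bounds the connectivity of the total cofiber from below, yielding exactly that $X\wedge Y$ is $(k+l+1)$-connected as a pointed object in the sense of Definition \ref{def. n-cocon.}. If one prefers to invoke excision only in $c\mathcal{CA}^{\mathcal{E}}$, the case of $c\mathcal{S}^{\mathcal{G}}$ reduces to it: all four maps of the defining square are $0$-connected (this is where $k,l\ge 0$ is used), so by Lemma \ref{Hsquare} the functor $H_{*}$ carries the square to the corresponding square computing $H_{*}(X)\wedge H_{*}(Y)$, and since $H_{*}$ both preserves the relevant homotopy pushouts and detects connectivity, the coalgebra estimate transports back to $c\mathcal{S}^{\mathcal{G}}$.

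The main obstacle is not the combinatorics of the square but the genuine use of excision in the relative, non-stable cosimplicial setting. In a non-stable category cobase change does \emph{not} preserve higher connectivity on the nose; it is preserved only within the excision range, so the decisive gain must come from the Blakers--Massey defect rather than from any formal manipulation of pushouts, and one has to verify that the hypotheses feed into the theorem of \cite{BRS} in precisely the normalization it requires. The second delicate point is index bookkeeping: one must keep apart the three notions of Definition \ref{def. n-cocon.} --- the connectivity of a map, cosimplicial $n$-connectivity, and $n$-connectivity of a pointed object --- and in particular check that the sections $Z\to X$, $Z\to Y$ inherit the connectivities of the structure maps $X\to Z$, $Y\to Z$, so that the indices assemble to $k+l+1$ and not to a value off by one.
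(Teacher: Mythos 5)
Your proposal is correct and follows essentially the same route as the paper: identify $X\wedge Y$ as the homotopy cofiber of the canonical map from the coproduct $X\vee Y$ to the product $X\times Y$, obtain the $(k+l+1)$-connectivity of that map from the homotopy excision theorem of \cite{BRS}, and reduce $c\mathcal{S}^{\mathcal{G}}$ to $c\mathcal{CA}^{\mathcal{E}}$ via $H_*$ exactly as you describe. The only cosmetic difference is in how the excision estimate is extracted: instead of iterating a Blakers--Massey bound over the two directions of the square, the paper passes to the abelian category of coaugmented cosimplicial $Z$-comodules, quotes \cite[4.6.5 b)]{BRS} to see that the comparison map $q\colon X\square Y\to X\oplus Y$ is $(k+l+1)$-connected, and uses the retraction $X\oplus Y\xrightarrow{\ i\ }X\square Y\xrightarrow{\ q\ }X\oplus Y$ to transfer that connectivity to the inclusion $i$, whose cofiber is $X\wedge Y$.
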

 \begin{proof} We consider the case $c\mathcal{CA}_Z^{\mathcal{E}}$. Note that in the abelian category of coaugmented cosimplicial $Z$-comodules $cComod(Z)_Z$ coproducts and products coincide. Consider
 \[X\oplus Y\stackrel{i}{\to} X\square Y \stackrel{q}{\to} X\oplus Y\]
 where the  map on the left is the canonical inclusion and the map on the right is the canonical map of $Z$-comodules. The composition is the identity and $q$ is $k+l+1$-connected by \cite[4.6.5. b)]{BRS}. It follows that $i$ is $k+l+1$-connected as well. By the homotopy pushout square
   \[ 
\xymatrix{
 X\oplus Y \ar[r]\ar[d] & X\square Y\ar[d]  \\
        Z \ar[r]   & X\wedge Y}\]
       $ X\wedge_Z $ is $k+l+1$-connected as a pointed object which was to be seen.\\
       We consider  the case $c\mathcal{S}^{\mathcal{G}}$. Since $H_*$ commutes with homotopy pullbacks \cite[6.2.3.]{BRS} there are isomorphisms \[H_* ( X\times Y)\cong H_* (X)\square_{}H_{*}(Y).\]
       Since  $H_*$ commutes with homotopy pushouts along pairs of maps one of which is $0$-connected \ref{Hsquare},  there are isomorphisms
       \[H_* ( X \vee Y)\cong H_* (X)\oplus_{}H_{*}(Y).\]
       Apply the same fact again to the homotopy pushout square
       
         \[ 
\xymatrix{
 X\vee Y \ar[r]\ar[d] & X\times Y\ar[d]  \\
        Z \ar[r]   & Y\wedge X}\]
  and find an isomorphism      
  \[ H_* (Y\wedge X)\cong H_* (X)\wedge H_* (Y).
  \] Now the assertion follows from the proved statement in  $c\mathcal{CA}^{\mathcal{E}}_Z$.$\square$
       
  \end{proof}
  % as pointed object. 

%\begin{proof}For (1) just iterate the equivalence in \ref{aC}(2).
% To see (2) insert the equivalences  for $X,Y$ in (1) into the third factor of \ref{aC}(3).$\square$
%\end{proof}
%\begin{remark} The lemma \ref{HMJ} is a very slight variation of \cite[Corollary 2.10]{D.H} where it is assumed that countable coproducts exist.
%The point is that the  pushout $X\vee Y$ may not be the coproduct in   $\mathcal{M}_*$ since there may not be an induced map $X\vee Y\to Z$ for some maps $f:X\to Z, g:Y\to Z$ in  $\mathcal{M}$. For example the induced map may not incuce an injection on $\pi^0 H_*$ in  $c \mathcal{S}_X^{\mathcal{G}}$. \end{remark}

\begin{lemma}\label{omegacon} Let $X\in  c\mathcal{M}^{\mathcal{F}}_Z$ and assume that $X\to Z$ is $k$-connected. Then $\Omega \Sigma (X)\to Z$ is also $k$-connected.
\end{lemma}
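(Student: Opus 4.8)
The plan is to bootstrap from the James splitting, which is available to us as item (2) of Theorem~\ref{main application} because $X\to Z$ is $k$-connected and hence in particular $0$-connected. It gives an equivalence
\[ \Sigma\Omega\Sigma X \simeq \bigvee_{i\ge 1}\Sigma X^{\wedge i} \]
in $c\mathcal{M}^{\mathcal{F}}_Z$. The idea is to read off the connectivity of the right-hand side from the smash-power estimate \ref{conest2} and the wedge estimate \ref{convee}, and then to \emph{desuspend}, using that $\Sigma$ shifts connectivity by exactly one.

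First I would record the behaviour of the right-hand side. Iterating Lemma~\ref{conest2} shows that the connectivity of $X^{\wedge i}\to Z$ is a non-decreasing function of $i$ (smashing with the $0$-connected object $X$ never lowers connectivity) whose value at $i=1$ equals $k$. Consequently, among the wedge summands $\Sigma X^{\wedge i}$ the one of least connectivity is the term $i=1$, namely $\Sigma X$, which is $(k+1)$-connected; all higher summands are at least as connected. Applying Lemma~\ref{convee} then yields that $\bigvee_{i\ge 1}\Sigma X^{\wedge i}\to Z$, and therefore $\Sigma\Omega\Sigma X\to Z$, is $(k+1)$-connected.

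It remains to deduce that $\Omega\Sigma X\to Z$ is $k$-connected from the fact that its suspension is $(k+1)$-connected. For this I would prove the auxiliary statement that $\Sigma$ raises the connectivity of a map to $Z$ by exactly one and, crucially, reflects it: $\Sigma W\to Z$ is $(j+1)$-connected if and only if $W\to Z$ is $j$-connected. By Lemma~\ref{Hsquare} this reduces to $c\mathcal{CA}^{\mathcal{E}}_Z$, where $H_*$ commutes with the defining homotopy pushout $\Sigma W=Z\sqcup_W Z$ (each leg $W\to Z$ being $0$-connected) and where homotopy pushouts are computed in the underlying cosimplicial vector spaces. There the long exact (Mayer--Vietoris) sequence of the pushout, combined with the splitting furnished by the coaugmentation $Z\to W\to Z$, produces a suspension isomorphism $\tilde\pi^{\,s}(\Sigma W)\cong\tilde\pi^{\,s-1}(W)$ on reduced cohomotopy, which gives the shift in both directions simultaneously.

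The main obstacle is precisely this last auxiliary statement: establishing the suspension isomorphism, and hence the desuspension of connectivity, inside the resolution model category rather than in spaces. Everything else is a direct combination of James, \ref{conest2} and \ref{convee}. Once the shift is in hand the conclusion that $\Omega\Sigma X\to Z$ is $k$-connected is immediate, and the case of $c\mathcal{S}^{\mathcal{G}}_Z$ follows from the coalgebra case, since a map there is $k$-connected if and only if its image under $H_*$ is, and $H_*$ commutes with both the suspension and the loop constructions involved.
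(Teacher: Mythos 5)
Your argument is correct in outline, but it takes a genuinely different and considerably heavier route than the paper. The paper's proof is two lines: in the abelian category $cComod(Z)_Z$ the functors $\Omega^{Comod}$ and $\Sigma^{Comod}=\Sigma$ are mutually inverse, so $\Omega^{Comod}\Sigma(X)\simeq X$ is $k$-connected, and the comparison map $\Omega\Sigma(X)\to\Omega^{Comod}\Sigma(X)$ is $(2k+1)$-connected by the Freudenthal-type statement \cite[Theorem 4.6.5 b)]{BRS}; since $2k+1>k$ the claim follows, and the case of $c\mathcal{S}^{\mathcal{G}}_Z$ reduces to this via \ref{Hsquare}. You instead invoke the James splitting (legitimately available from Section 4, so there is no circularity), estimate the connectivity of $\bigvee_{i\ge 1}\Sigma X^{\wedge i}$ via \ref{conest2} and \ref{convee}, and then desuspend. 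Two remarks on this. First, the ``main obstacle'' you identify --- that $\Sigma$ shifts connectivity by exactly one and reflects it --- is essentially the same fact the paper uses directly, namely that the model-categorical suspension coincides with the invertible comodule suspension $\Sigma^{Comod}$; once you grant that, you already have everything needed for the paper's shorter argument, and the detour through James buys nothing. Second, two small points you should make explicit if you pursue your route: (i) to apply \ref{Hsquare} to the pushout defining $\Sigma(\Omega\Sigma X)$ you need $\Omega\Sigma X\to Z$ to be $0$-connected, which follows from \ref{0pull} applied to the defining pullback since the section $Z\to\Sigma X$ is split and hence $0$-connected; and (ii) when iterating \ref{conest2} you must keep track of the off-by-one between connectivity of the pointed object $X^{\wedge i}$ and connectivity of the map $X^{\wedge i}\to Z$ before feeding the result into \ref{convee}. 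With those caveats your proof goes through, but the paper's is both shorter and independent of Theorem \ref{main application}.
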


\begin{proof} We  start as usual by considering  $c\mathcal{CA}^{\mathcal{E}}_Z$. The assertion is true in the abelian category $cComod(Z)_Z$ since the loop and suspension functors  $\Omega^{Comod}$ and $\Sigma^{Comod} = \Sigma$ are inverse to each other.
By \cite[Theorem 4.6.5. b)]{BRS} the natural map
\[\Omega  \Sigma (X)\to \Omega^{Comod}\Sigma(X)\]
is $(2k+1)$-connected.  This proves the assertion in $c\mathcal{CA}^{\mathcal{E}}_Z$.\\
 It holds in $c\mathcal{S}^{\mathcal{G}}_Z$
 by an application of \ref{Hsquare}.$\square$
\end{proof}
 
 \section{ The Hilton-Milnor theorem in   $c\mathcal{M}^{\mathcal{F}}_Z$}
We start with a preparatory lemma  established by Gray in the category of topological spaces \cite{Gr}. 

\begin{lemma}\label{fibproj}Let
% $Z\in c\mathcal{M}^{\mathcal{F}}$ and
  $X,Y\in c\mathcal{M}^{\mathcal{F}}_Z$ with $X\to Z$ and $Y\to Z$ $0$-connected. Then there is an equivalence \[ Y\times \Sigma X/ Y\simeq \Sigma X\vee \Sigma(X \wedge Y)\]
\end{lemma}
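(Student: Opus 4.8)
The plan is to argue throughout in the pointed $\infty$-category $cM_Z^{\mathcal{F}}$, using only formal consequences of the restricted cube axiom (in particular universality of pushouts, \ref{unipush}), so that the cases $\mathcal{M}=\mathcal{CA}$ and $\mathcal{M}=\mathcal{S}$ are handled uniformly. Write $Q:=Y\times\Sigma X/Y$ for the homotopy cofiber of the inclusion $\bar\jmath\colon Y\to Y\times\Sigma X$ determined by the basepoint $Z\to\Sigma X$. The first step is to linearize the product over the suspension: since $\Sigma X$ is the homotopy pushout of $Z\leftarrow X\rightarrow Z$ along the structure map $X\to Z$, which is $0$-connected by hypothesis, Lemma \ref{unipush} applies and the base-change functor $Y\times_Z(-)$ carries this pushout to a pushout. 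Hence $Y\times\Sigma X\simeq\mathrm{hocolim}(Y\leftarrow Y\times X\rightarrow Y)$, both legs being the projection $pr_Y$. Pasting the defining cofiber square of $\bar\jmath$ onto this pushout then identifies $Q\simeq\mathrm{cofib}(pr_Y\colon Y\times X\to Y)$.

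Next I would invoke the elementary cofiber-of-cofibers sequence attached to the composable pair $Y\vee X \xrightarrow{\iota} Y\times X \xrightarrow{pr_Y} Y$, which is valid in any $\infty$-category with pushouts and requires no connectivity. Since $\mathrm{cofib}(\iota)\simeq Y\wedge X$, while $pr_Y\circ\iota=(1_Y,\ast)$ has cofiber $\Sigma X$, this produces a cofiber sequence $Y\wedge X \xrightarrow{\phi} \Sigma X \xrightarrow{\psi} Q$. The whole lemma then reduces to the claim that the attaching map $\phi$ is nullhomotopic: granting this, $Q\simeq\mathrm{cofib}(\ast\colon Y\wedge X\to\Sigma X)\simeq\Sigma X\vee\Sigma(Y\wedge X)=\Sigma X\vee\Sigma(X\wedge Y)$, which is exactly the assertion.

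To kill $\phi$ I would exhibit a retraction of $\psi$. The projection $Y\times\Sigma X\to\Sigma X$ carries $\bar\jmath(Y)$ to the basepoint and so descends to a map $\bar p\colon Q\to\Sigma X$, while the section $Z\to Y$ induces a map $\sigma\colon\Sigma X\to Q$, and one checks directly that $\bar p\,\sigma\simeq 1_{\Sigma X}$. The main obstacle, and the step I expect to require the most care, is to verify that the map $\psi$ arising from the cofiber-of-cofibers construction coincides with this natural inclusion $\sigma$ of the suspension coordinate $\Sigma X=\ast\cup_{\ast\times X}\ast$ into $Q$, under the identification $Q\simeq\mathrm{cofib}(pr_Y)$ of the first paragraph; this amounts to tracking the two descriptions of the subobject $\ast\times\Sigma X\subset Y\times\Sigma X$ through the universality equivalence. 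Once $\bar p\,\psi\simeq 1_{\Sigma X}$ is confirmed, nullity is purely formal: in a cofiber sequence $\psi\phi\simeq\ast$, whence $\phi\simeq(\bar p\,\psi)\phi\simeq\bar p(\psi\phi)\simeq\ast$. I would finally remark that, should the identification $\psi\simeq\sigma$ prove delicate to pin down directly, one may instead verify that the resulting comparison map $\Sigma X\vee\Sigma(X\wedge Y)\to Q$ is an equivalence after applying $H_*$, reducing via \ref{Hsquare} to the algebraic category $c\mathcal{CA}^{\mathcal{E}}_Z$ where colimits and the relevant smash/product decompositions are computed in the underlying cosimplicial vector spaces.
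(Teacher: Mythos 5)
Your argument is correct and shares its load-bearing step with the paper's proof: both begin by applying the universality of pushouts (Lemma \ref{unipush}, using only that $X\to Z$ is $0$-connected) to base-change the defining pushout of $\Sigma X$ along $Y\times \Sigma X\to \Sigma X$, thereby identifying $Y\times\Sigma X/Y$ with $\mathrm{cofib}(pr_Y\colon X\times Y\to Y)$ via the pasting law. Where you diverge is in the second half: the paper simply quotes \cite[Corollary 2.21.1]{D.H} for the identification $\mathrm{cofib}(pr_Y)\simeq \Sigma X\vee\Sigma(X\wedge Y)$, whereas you re-prove this splitting from scratch via the cofiber-of-cofibers sequence for $Y\vee X\to Y\times X\to Y$ and a retraction killing the attaching map $\phi\colon Y\wedge X\to\Sigma X$. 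That part of your argument is purely formal (it needs no connectivity and no cube axiom), so what you gain is a self-contained proof; what you lose is brevity. The one step you rightly flag as delicate — matching $\psi$ with the inclusion $\sigma$ of the suspension coordinate — can be avoided entirely: instead of routing the retraction through the identification $Q\simeq\mathrm{cofib}(Y\to Y\times\Sigma X)$, take the map of vertical cofibers induced by the square carrying $pr_Y\colon Y\times X\to Y$ to $X\to\ast$ via $pr_X$; composing with the square inducing $\psi$ gives the identity square on $X\to\ast$, so $\bar p\,\psi\simeq 1_{\Sigma X}$ follows from functoriality of cofibers with no further diagram chase, and your fallback via $H_*$ and \ref{Hsquare} is then unnecessary.
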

\begin{proof} The following diagram
\[ 
\xymatrix{
 X\times Y \ar[r]\ar[d] & Y \ar[d] \\
        Y \ar[r]   & \Sigma X\times Y}
     \] is a homotopy pushout by \ref{unipush} as is its composition with
     
\[ 
\xymatrix{
  Y \ar[r]\ar[d] &  \Sigma X\times Y \ar[d] \\
        \ast \ar[r]   & \Sigma X\vee \Sigma(X \wedge Y)}
     \]
     by \cite[Corollary 2.21.1]{D.H}. The pasting law for homotopy pushouts implies that the second diagram is a homotopy pushout as well.$\square$
\end{proof} 

\begin{corollary} Let $\Sigma X, Y\in  c\mathcal{M}_Z^{\mathcal{F}}$ be $1$-connected as pointed objects. Then there is a homotopy fiber sequence 
\[\Sigma X\vee \Sigma (X\wedge \Omega  Y)\to\Sigma X\vee Y\to  Y\]
\end{corollary}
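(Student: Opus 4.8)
The plan is to obtain the corollary as the instance of the fibre sequence (7)... rather, of clause (6) of Theorem \ref{main application} in which the first variable is the suspension $\Sigma X$, and then to rewrite its fibre by means of the preceding Lemma \ref{fibproj}. Concretely, I would first apply item (6) of \ref{main application} (equivalently the corresponding clause of \ref{aC}) with $X$ replaced by $\Sigma X$ and the second variable left as $Y$. Since $\Sigma X$ and $Y$ are $1$-connected as pointed objects, their structure maps to the zero object $Z$ are in particular $0$-connected, so the hypotheses of (6) are met and one obtains a homotopy fibre sequence
\[(\Omega Y\times \Sigma X)/\Omega Y\longrightarrow \Sigma X\vee Y\longrightarrow Y,\]
in which the second map is the canonical retraction, the identity on $Y$ and trivial on $\Sigma X$.

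It then remains to identify the fibre $(\Omega Y\times \Sigma X)/\Omega Y$. For this I would invoke Lemma \ref{fibproj} with the roles of its two variables played by $X$ (in the first slot) and $\Omega Y$ (in the second, looped, slot), which gives an equivalence
\[(\Omega Y\times \Sigma X)/\Omega Y\;\simeq\;\Sigma X\vee \Sigma(X\wedge\Omega Y).\]
To apply \ref{fibproj} I must check that $X\to Z$ and $\Omega Y\to Z$ are $0$-connected. Both follow from the standing hypotheses together with the standard connectivity behaviour of $\Sigma$ and $\Omega$ recorded in the connectivity section (compare \ref{omegacon}): the suspension isomorphism forces $X$ to be $0$-connected once $\Sigma X$ is $1$-connected, while looping lowers connectivity by one, so $Y$ being $1$-connected makes $\Omega Y$ $0$-connected. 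Substituting this identification into the fibre sequence above yields exactly
\[\Sigma X\vee \Sigma(X\wedge\Omega Y)\longrightarrow \Sigma X\vee Y\longrightarrow Y,\]
as required.

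The connectivity bookkeeping is routine, but I expect the point requiring care to be the translation between the two conventions of connectivity in play: the hypotheses are phrased in terms of pointed objects being $1$-connected, i.e.\ via the coaugmentation $Z\to W$, whereas the hypotheses of (6) and of \ref{fibproj} are phrased in terms of the structure maps $W\to Z$ being $0$-connected. Because each object here is a retract $Z\to W\to Z$, the reduced part of $\pi_*[W,F]$ splits off as the kernel of the coaugmentation-induced map and as the cokernel of the structure-map-induced map, so the two notions agree up to the expected index shift; this is precisely why the $1$-connectivity hypotheses, rather than mere $0$-connectivity, are what is needed to feed both the unsuspended $X$ and the looped $\Omega Y$ into \ref{fibproj}. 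A secondary point to verify is that the equivalence of \ref{fibproj} is compatible with the fibre inclusion, so the two fibre sequences are genuinely identified; since the fibre is canonically determined by the total map $\Sigma X\vee Y\to Y$, it suffices that \ref{fibproj} identifies this particular fibre, which it does.
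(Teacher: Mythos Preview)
Your proposal is correct and follows essentially the same route as the paper's proof: the paper's argument is the one-liner ``This follows from \ref{fibproj} together with (6) in \ref{main application}'', which is precisely your two steps of applying item (6) with $\Sigma X$ in place of $X$ and then invoking Lemma~\ref{fibproj} (with $\Omega Y$ in the second slot) to identify the fibre. Your additional connectivity bookkeeping and the remark on the two conventions of connectedness are extra care the paper leaves implicit.
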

\begin{proof} This follows from \ref{fibproj} together with (6) in \ref{main application} .$\square$\\\end{proof}

Using this we arrive at.

\begin{lemma} Let $\Sigma X, Y\in  c\mathcal{M}_Z^{\mathcal{F}}$ be $1$-connected as pointed objects. Then there is an equivalence
\[\Omega (\Sigma  X\vee Y)\simeq \Omega (Y)\times \Omega (\Sigma  X\vee \Sigma (X\wedge \Omega  Y))\]
\end{lemma}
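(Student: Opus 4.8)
The plan is to realize the loop space splitting as the consequence of a split homotopy fiber sequence, exactly the mechanism that produces the Hilton--Milnor decomposition in spaces. First I would invoke the preceding corollary, which supplies the homotopy fiber sequence
\[
\Sigma X\vee \Sigma (X\wedge \Omega Y)\to \Sigma X\vee Y\to Y.
\]
Applying the external loop functor $\Omega$ to a homotopy fiber sequence yields again a homotopy fiber sequence
\[
\Omega\bigl(\Sigma X\vee \Sigma (X\wedge \Omega Y)\bigr)\to \Omega(\Sigma X\vee Y)\to \Omega(Y),
\]
since $\Omega$ is a right derived functor and hence preserves homotopy fiber sequences in the pointed model category $c\mathcal{M}^{\mathcal{F}}_Z$.

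The key structural input is that this fiber sequence is \emph{split}. The projection $\Sigma X\vee Y\to Y$ admits a section, namely the wedge inclusion $Y\to \Sigma X\vee Y$; looping the section gives a section $\Omega Y\to \Omega(\Sigma X\vee Y)$ of the looped projection. Thus the base $\Omega(Y)$ retracts off the total space. In a pointed setting where the relevant fiber sequence splits via a section of the base, the total loop space decomposes as the product of the base with the fiber. I would therefore argue that a split homotopy fiber sequence of grouplike objects (loop spaces) in $c\mathcal{M}^{\mathcal{F}}_Z$ gives an equivalence
\[
\Omega(\Sigma X\vee Y)\simeq \Omega(Y)\times \Omega\bigl(\Sigma X\vee \Sigma(X\wedge \Omega Y)\bigr),
\]
which is precisely the asserted equivalence.

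The main obstacle, and the step requiring the most care, is justifying that a \emph{split} homotopy fiber sequence of loop objects splits as a product in $c\mathcal{M}^{\mathcal{F}}_Z$. In topological spaces this is classical, but here one must ensure the argument is purely formal and uses only the structure available: the fiber sequence of loop objects lives among grouplike objects, and the section of the base produces a trivialization of the underlying principal fibration. I would verify this either by appealing to the general fact that in a pointed model category a fiber sequence $F\to E\to B$ with a section $B\to E$ of the projection, where all objects are loop objects, yields $E\simeq F\times B$ via the shearing map built from the multiplication on $E$ and the section; or, following the reduction strategy used throughout the paper, by transporting the claim through the homology functor of \ref{Hsquare} to the algebraically transparent category $c\mathcal{CA}^{\mathcal{E}}_Z$, where loop objects are comodules and the splitting of a short exact sequence of comodules is immediate. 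The connectivity hypotheses ($\Sigma X$ and $Y$ are $1$-connected) guarantee that the maps entering the fiber sequence are sufficiently connected for \ref{Hsquare} to apply and for $\Omega$ to be well behaved.
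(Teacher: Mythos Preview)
Your approach is essentially the same as the paper's: both start from the homotopy fiber sequence of the preceding corollary, use the wedge inclusion $Y\to\Sigma X\vee Y$ as a section, and produce the equivalence by multiplying the looped section with the looped fiber inclusion inside the group object $\Omega(\Sigma X\vee Y)$. The paper's proof is terser than yours---it simply names the two maps, multiplies them, and asserts the result is the desired equivalence---whereas you are more explicit about the step that needs justification (why the shearing map is an equivalence) and offer two routes to fill it; the paper leaves this implicit.
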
\begin{proof} The equivalence is induced by the inclusion  
$  Y\to\Sigma X\vee Y$ and the map  $\Sigma X\vee \Sigma (X\wedge \Omega  Y)\to\Sigma X\vee Y$. The loops of these maps can be multiplied in $\Omega (\Sigma  X\vee Y)$ which is a group object in the homotopy category of  $c\mathcal{M}_Z^{\mathcal{F}}$ and this map is the searched for equivalence.\\ $\square$
\end{proof}

As a consequence of the James splitting in \ref{main application} we get:

\begin{lemma}\label{conssplitting} Let $\Sigma X, \Sigma Y\in  c\mathcal{M}_Z^{\mathcal{F}}$ be $1$-connected as pointed objects. Then there is an equivalence 
\[\Omega (\Sigma  X\vee \Sigma  Y)\simeq \Omega \Sigma ( X)\times  \Omega \Sigma (\bigvee_{0\leq i<\infty} Y\wedge X^{\wedge i})\]
\end{lemma}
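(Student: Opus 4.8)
The plan is to obtain the splitting from the preceding lemma, which gives $\Omega(\Sigma X\vee Y)\simeq\Omega Y\times\Omega\bigl(\Sigma X\vee\Sigma(X\wedge\Omega Y)\bigr)$ whenever $\Sigma X$ and $Y$ are $1$-connected, by taking $Y$ to be a suspension and feeding the James decomposition of $\Omega\Sigma X$ from \ref{main application}(2) into the resulting smash term. All of the work takes place uniformly in the underlying pointed $\infty$-category $cM_Z^{\mathcal{F}}$, where the cube axiom and its consequences from \ref{aC} are available.

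First I would rewrite $\Sigma X\vee\Sigma Y\simeq\Sigma Y\vee\Sigma X$ using commutativity of the wedge and apply the preceding lemma with $\Sigma Y$ in the role of the distinguished suspension and $\Sigma X$ in the role of $Y$. Both objects are $1$-connected by hypothesis, so the lemma yields
\[\Omega(\Sigma X\vee\Sigma Y)\simeq\Omega\Sigma X\times\Omega\bigl(\Sigma Y\vee\Sigma(Y\wedge\Omega\Sigma X)\bigr).\]
The first factor already coincides with the first factor of the target, so the task reduces to identifying the second loop space with $\Omega\Sigma\bigl(\bigvee_{0\le i<\infty}Y\wedge X^{\wedge i}\bigr)$.

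For this I would invoke the James splitting $\Sigma\Omega\Sigma X\simeq\bigvee_{i\ge 1}\Sigma X^{\wedge i}$; the hypotheses guarantee that the structure map $X\to Z$ is $0$-connected, which is exactly what \ref{main application}(2) requires. Smashing this equivalence with $Y$ and using the interchange $\Sigma(Y\wedge\Omega\Sigma X)\simeq Y\wedge\Sigma\Omega\Sigma X$ together with the distributivity of the derived smash over wedges gives
\[\Sigma(Y\wedge\Omega\Sigma X)\simeq Y\wedge\bigvee_{i\ge 1}\Sigma X^{\wedge i}\simeq\bigvee_{i\ge 1}\Sigma(Y\wedge X^{\wedge i}).\]
Since $\Sigma Y=\Sigma(Y\wedge X^{\wedge 0})$, where $X^{\wedge 0}$ is the smash unit so that $Y\wedge X^{\wedge 0}\simeq Y$, and since suspension commutes with wedges, I would then combine the $\Sigma Y$ summand with the above to get $\Sigma Y\vee\Sigma(Y\wedge\Omega\Sigma X)\simeq\Sigma\bigl(\bigvee_{0\le i<\infty}Y\wedge X^{\wedge i}\bigr)$. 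Substituting this back into the displayed equivalence produces the claim.

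The step demanding the most care is the treatment of the infinite wedge: I must ensure that $Y\wedge(-)$ and $\Sigma$ distribute over the countable coproduct produced by the James splitting. These distributivity properties follow, as in the proofs of the corresponding classical splittings in \cite{D.H}, from the universality of homotopy pushouts along $0$-connected maps (\ref{unipush}) and from the fact that $\Sigma$ and $\wedge$ are assembled out of homotopy pushouts, each infinite wedge arising as the filtered colimit of its finite subwedges; meanwhile the connectivity estimates \ref{convee} and \ref{conest2} keep the homotopy types of these infinite constructions under control. This colimit bookkeeping is the only non-formal ingredient, the remainder being the direct chain of equivalences above.
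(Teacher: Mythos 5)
Your derivation is correct and is exactly the argument the paper intends: the lemma is stated there as an immediate consequence of the preceding splitting $\Omega(\Sigma X\vee Y)\simeq\Omega (Y)\times\Omega\bigl(\Sigma X\vee\Sigma(X\wedge\Omega Y)\bigr)$ (with the roles of $X$ and $Y$ interchanged, as you do) combined with the James splitting of \ref{main application}(2) and distributivity of the derived smash and suspension over the countable wedge. The only point the paper leaves implicit, and which you rightly single out, is the compatibility of $\Sigma$ and $Y\wedge(-)$ with the infinite wedge viewed as a filtered colimit of finite subwedges, which is controlled by \ref{unipush} and the connectivity estimates of section 5.
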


The proof of the Hilton-Milnor   theorem below proceeds from here on essentially as in the classical sources \cite{Mi}, \cite{Wh} but amplified by the connectivity estimates proved in section 5.
% We repeat the argument for convenience.
  These are needed in order to control the convergence of a series of maps which define the asserted equivalence.\\

We recall the classical  notion of basic products on $k$ generators.

\begin{definition} Let $T$ be the free non-associative algebra on  $k$ generators $x_1 ,\ldots ,x_k$. Basic products of weight $w$ and rank $r$  are given inductively. We define  $x_1 ,\ldots ,x_k$ to be the basic products of weight $1$ and rank $0$. Suppose the basic products of weight $n$ have been found as $x_1, \ldots ,x_m$ together with a rank function  $r$ which satisfies $r(x_i)<i$. The basic products of weight $n+1$ are formed as all $x_i x_j$ with $w(x_i)+w(x_j)=n+1$ and $r(x_i)\leq j<i$. Order them in any way $x_{m+1},\ldots ,x_{l}$ and extend the function $r$  as $r(x_h)=j$ for $x_h = x_i x_j$.
\end{definition} 

For more details we refer to \cite{Wh}. By a theorem of Hall \cite{Ha} the basic products form a basis for the free Lie ring on generators $x_1 ,\ldots ,x_k$.
 
 \begin{definition} Let $w=x_m$ be the m-th basic product. Define  \[w(X,Y)=X_m  = X^{a_x }\wedge Y^{a_y }\] where $a_x$ is the number of occurrences of $x$ in the word $x_m$ and $a_y$ the number of occurrences of $y$.
 \end{definition}

% The basic products are not uniquely defined but depend on a chosen order in the induction.
 
 % The weight of a monomial is the number of its factors. The rank $r(m)$ of  basic product $m$ is a non-negative integer defined inductively as in \cite{Wh}.  

 \begin{proof}of the Hilton-Milnor theorem \ref{traditional HM}.\\
  We define \[R_m =\bigvee_{\stackrel{i\geq m}{r(x_i )<m}} X_i , \quad R^{'}_m =\bigvee_{\stackrel{i>m}{r(x_i )<m}} X_i\],
 so that $R_m = X_m \vee R^{'}_m$. We have equivalences
 \[\Omega \Sigma (X_m \vee R^{'}_m )\simeq \Omega \Sigma (X_m)\times \Omega \Sigma ( \bigvee_{{i\geq 0}}( R^{'}_m\wedge  X_m^{\wedge i})\simeq \]\[ \Omega \Sigma (X_m)\times \Omega \Sigma ( \bigvee_{i\geq 0}\bigvee_{\stackrel{j>m}{r(w_j )<m}}(X_j \wedge X_m^{\wedge i})\simeq  \Omega \Sigma (X_m)\times  \Omega \Sigma (R_{m+1})\] by  \ref{conssplitting} and definition of $R^{'}_m$ 
where  the last equivalence holds by a combinatorical identity for basic products to be found in \cite[6.1. p.512]{Wh}.\\
For $x_m$ a basic monomial of weight $w$ the connectivity of  $X_m  $ is bigger or equal to $w-1$ by   \ref{conest2}. As there are only finitely many basic  monomials of a given weight, the connectivity of $X_m$ tends to infinity with $m$.\\
 Due to \ref{convee} the connectivity of $R_m$ tends with $m$ to infinity as well.\\ Let \[f_m: \Omega \Sigma (X_m ) \times  \Omega \Sigma(R_{m+1})\to  \Omega \Sigma(R_{m})\] be  a weak equivalence.
Let $g_m = 1\times f_m$ then
\[g_m:(\Omega \Sigma (X_1 )\times  \ldots \times \Omega \Sigma (X_{m-1} ))\times \Omega \Sigma (X_m ) \times  \Omega \Sigma(R_{m+1})\to (\Omega \Sigma (X_1 )\times  \ldots \times \Omega \Sigma (X_{m-1} ))\times  \Omega \Sigma(R_{m})\]
is a weak equivalence as follows  from the spectral sequence of \cite[Theorem 4.5.1 (b)]{BRS}.\\
 Let $h_m = g_1 \circ g_2 \circ \ldots \circ g_m  |:\Omega \Sigma (X_1 )\times \ldots \times \Omega \Sigma (X_m )$.\\
 Because the pointed object $P=\prod_{\stackrel{m=1}{}}^{\infty} \Omega \Sigma (X_m )$ is filtered by the objects $P_n =\prod_{\stackrel{i=1}{}}^n  \Omega \Sigma (X_i )$ and $h_n |P_{n-1}=h_{n-1}$   the collection of  $h_m$ define a map
\[h:P \to \Omega \Sigma (R_1)=\Omega \Sigma (X\vee Y ).\]  
We show that $h$ induces an isomorphism on $\pi^{r}()$ for each $r$.
Choose $N$  such that $X_m$ and $R_{m+1}$ are $r$-connected for all $m\geq N$. Consider the obvious maps
\[i_N :P_N \to P\] and 
\[h_N :P_N \to P_N \times \Omega \Sigma (R_{N+1}).\]

 As a consequence of \ref{inf prod}, \ref{omegacon} and an application of the K\"unneth spectral sequence , they induce  isomorphisms on $\pi^r(-)$.
 Inspection of the commutative diagram
 
 \[ 
\xymatrix{
 \pi^r (P_N ) \ar[r]^-{h^r_N}\ar[d]^{i^r_N} &  \pi^r (P_N \times \Omega \Sigma (R_{n+1}) ) \ar[d]^{g^r_N } \\
       \pi^r(P) \ar[r]^-{h^r}   & \pi^r (\Omega \Sigma (X\vee Y) )}
     \] gives the assertion.$\square $ 
 
 \end{proof}

\begin{remark} Porter has extended the classical Hilton-Milnor theorem by decomposing the loop space of the fat wedge of suspension spaces \cite{P}. Such an extension also holds in $c\mathcal{M}_Z^{\mathcal{F}}$. 
\end{remark}

\noindent
 Manfred Stelzer,
Universit\"at Osnabr\"uck.

\end{document}